\documentclass{amsart} 

\usepackage{amsmath, amsfonts, amssymb, amsthm, mathtools}
\usepackage[T1]{fontenc}
\usepackage[scr = dutchcal, bfscr]{mathalfa} 
\usepackage{ae}
\usepackage{aecompl}

\usepackage{adjustbox}
\usepackage{enumitem}

\usepackage{subcaption}
\usepackage{graphicx}

\usepackage{hyperref}
\usepackage[capitalise, noabbrev]{cleveref}
\hypersetup{
	colorlinks = true,
	citecolor  = blue,
    linkcolor  = blue,
    urlcolor   = blue,
    pdfauthor = {Trung Chau, Richie Sheng, Deborah Wooton},
    pdftitle = {Homological invariants  of edge ideals of weighted oriented graphs}
}

\newtheorem{theorem}{Theorem}[section]
\newtheorem{corollary}[theorem]{Corollary}
\newtheorem{lemma}[theorem]{Lemma}
\newtheorem{proposition}[theorem]{Proposition}

\numberwithin{equation}{section}

\theoremstyle{definition}
\newtheorem{definition}[theorem]{Definition}

\newtheorem{example}[theorem]{Example}

\newtheorem{chunk}[theorem]{}


\newcommand{\mc}{\mathcal}
\newcommand{\mf}{\mathfrak}

\newcommand{\NN}{\mathbb{N}}

\renewcommand{\phi}{\varphi}

\newcommand{\depth}{\operatorname{depth}}

\newcommand{\height}{\operatorname{ht}}

\newcommand{\mingens}{\operatorname{Mingens}}
\newcommand{\lcm}{\operatorname{lcm}}

\newcommand{\pdim}{\operatorname{pdim}}
\newcommand{\reg}{\operatorname{reg}}
\newcommand{\pd}{\operatorname{pd}}

\newcommand{\ddUnw}[1]{\mathcal{DD}({#1})}
\newcommand{\ddrUnw}[1]{\mathcal{DDR}({#1})}
\newcommand{\dd}[1]{\mathscr{DD}({#1})}
\newcommand{\ddr}[1]{\mathscr{DDR}({#1})}
\newcommand{\bptUnw}[1]{\mathcal{PR}^{BPT}({#1})}
\newcommand{\treeUnw}[1]{\mathcal{PR}^{TREE}({#1})}
\newcommand{\bpt}[1]{\mathscr{PR}^{BPT}({#1})}
\newcommand{\tree}[1]{\mathscr{PR}^{TREE}({#1})}

\newcommand{\w}{\mathbf{w}}

\title{Homological invariants of edge ideals of weighted oriented graphs}

\author[Chau]{Trung Chau}
\address{Chennai Mathematical Institute, H1 SIPCOT IT Park, Siruseri, Kelambakkam, \newline\indent India}
\email{chauchitrung1996@gmail.com}

\author[Sheng]{Richie Sheng}
\address{Department of Mathematics, University of Utah, Salt Lake City, UT, USA}
\email{u1415944@utah.edu}

\author[Wooton]{Deborah Wooton}
\address{Department of Mathematics, University of Utah, Salt Lake City, UT, USA}
\email{deborah.wooton@utah.edu}

\subjclass{13D02, 05E40, 05C70, 13F55}
\keywords{edge ideal, weighted oriented graph, dimension, depth, regularity, bipartite graph, tree}

\begin{document}
    \begin{abstract}
        We determine all possible triples of depth, dimension, and regularity of edge ideals of weighted oriented graphs with a fixed number of vertices. Also, we compute all the possible Betti table sizes of edge ideals of weighted oriented trees and bipartite~graphs with a fixed number of vertices. These results extend those of Kanno and Erey and Hibi from simple graphs to weighted oriented graphs.
    \end{abstract}
    
    \maketitle
    
    \section{Introduction}
         Let $\Bbbk$ be a field and $S = \Bbbk [x_1,x_2,\dots,x_n]$ denote a polynomial ring over $\Bbbk$. Further, let $G$ be a finite simple graph on the vertex set $\{x_i\colon\ i= 1,\dots, n\}$. The \emph{edge ideal} of $G$, denoted by $I(G)$, is the ideal of $S$ generated by $x_ix_j$ whenever $\{x_i,x_j\}$ forms an edge of $G$. Due to their connections to graphs and combinatorics, edge ideals are a central object of interest in commutative algebra.

        Studying the relationship among well-known homological invariants of edge ideals has recently been an active area of study in commutative algebra \cite{HMvT19, HKO19, hkkmt, HKKT21, KKS21, HKMvT22, hku,kanno2023tuples}. Such invariants include $\dim S/I(G)$, $\depth S/I(G)$, and $\reg S/I(G)$, to name a few; see \cref{sec:prelims} for definitions. Among these, there are clear and well-known relationships; e.g., we always have $1 \leq \depth S/I(G) \leq \dim S/I(G)$. In general, there are no more relationships between depth and dimension, even for edge ideals -- for any two integers $1 \leq a \leq b $, there exists a (star) graph $G$ such that $\depth S/I(G) = a$ and $\dim S/I(G) = b$.

        The problem becomes much more interesting when one restricts to \textbf{connected} graphs with a fixed number of vertices $n$. It was proven in \cite{hkkmt}\footnote{Theorem 2.8} that
        \[
            \ddUnw{n} \supseteq \left\{ (a,b) \in \mathbb{Z}^2\colon 1 \leq a \leq b \leq n - 1,\ a \leq b + 1 - \left\lceil \frac{b}{n - b} \right\rceil \right\},
        \]
        where
        \[ 
            \ddUnw{n} \coloneqq \left\{ (\depth S/I(G), \dim S/I(G))\colon \text{$G$ is connected on $n$ vertices} \right\}. 
        \]
        The authors in \cite{hku} conjectured that the converse also holds and proved it for $n\leq 12$. Kanno \cite{kanno2023tuples} settled the conjecture in its full generality. In brief, when we restrict to the class of connected graphs on $n$ vertices, an additional, more subtle constraint appears: \(a \leq b + 1 - \lceil b/(n - b) \rceil\). In \cite{kanno2023tuples}, Kanno also determined all the triples of depth, dimension, and (Castelnuovo-Mumford) regularity of edge ideals of connected graphs on $n$ vertices, which we denote by $\ddrUnw{n}$.

        In a different direction, edge ideals of weighted oriented graphs have also been an area of interest for study; see \cref{subsec:weighted-oriented} for definitions. As the name suggests, a \emph{weighted oriented graph} $(G,\w)$ is a graph whose vertices are assigned integers (called \emph{weights}, given by a \emph{weight function} $\w\colon V(G)\to \mathbb{N}$) and whose edges are oriented. The \emph{edge ideal} of a weighted oriented graph $(G,\w)$ on the vertex set $\{x_i \colon\ i = 1, \dots, n\}$ is defined as
        \[
            I(G, \w) \coloneqq (x_ix_j^{\w(x_j)} \colon \{x_i, x_j\} \text{ is an edge in $G$ oriented towards $x_j$}).
        \]
        Formally defined in \cite{PRT19}, this class of ideals quickly gained traction (\cite{GMSV18, HLMRV19, PRV21, casiday2021betti, CPR22, BCDMS23, CK24, Saha24}, to name a few examples), as it is a generalization of the classical edge ideals of graphs. Moreover, these ideals also appear naturally in coding theory as the initial ideals of the vanishing ideals of a finite set of points in a projective space. Among other applications, homological information of edge ideals of weighted oriented graphs has been used to derive some basic parameters of the Reed-Muller code of such finite sets of points (see, e.g., \cite{CNL17,MPV17}). Edge ideals of weighted oriented graphs, unlike those of graphs, are neither generated in the same degree, nor squarefree. Thus it is no surprise they exhibit different behaviors, e.g., their quotient rings can have depth 0. An important technical result of this article is \cref{thm:depth-zero-wo-graphs}, where we obtained a full characterization on when the quotient rings by edge ideals of weighted oriented graphs have depth $0$.
    
        In this paper, our goal is to extend the results of Kanno \cite{kanno2023tuples}, by determining $\ddUnw{n}$ and $\ddrUnw{n}$ in the context of connected weighted oriented graphs on $n$ vertices, where $n$ is fixed. We set up some notation:
        \begin{align*}
            \dd{n} &\coloneqq \!\begin{multlined}[t]
                \bigl\{ \bigl( \depth S/I(G), \dim S/I(G) \bigr) \colon \\
                \text{$G$ is a connected weighted oriented graph on $n$ vertices} \bigr\};
            \end{multlined}\\
            \ddr{n} &\coloneqq \!\begin{multlined}[t]
                \bigl\{ \bigl( \depth S/I(G), \dim S/I(G), \reg S/I(G) \bigr) \colon \\
                \text{$G$ is a connected weighted oriented graph on $n$ vertices} \bigr\}.
            \end{multlined}
        \end{align*}
        
        Our first main result is an explicit description of $\dd{n}$ and $\ddr{n}$ for $n\geq 2$. For the case $n = 1$, we note that the only ideal that can be defined from a (weighted oriented) graph on a single vertex is the zero ideal in $S = \Bbbk[x_1]$. 
        
        \begin{theorem}[{\cref{thm:dd-wo} and \cref{thm:ddr-wo}}]
            \label{thm:main-thm-1}
            For any $n \geq 2$, we have:
                \begin{enumerate}
                    \item \(\dd{n}  = \ddUnw{n} \cup \{ (0, b) \colon 1 \leq b \leq n - 2 \}\);
                    \item \(\ddr{n} = (\ddUnw{n} \times \NN) \cup \{ (0,b,c) \colon 1 \leq b \leq n - 2,\ c \geq 3\}\).
                \end{enumerate}
        \end{theorem}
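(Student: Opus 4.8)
The plan is to prove each of the two set equalities by double inclusion, reducing the regularity statement to the $(\depth,\dim)$ statement together with a mechanism for prescribing regularity. Two observations are used throughout. First, since $\w(x_j)\ge 1$ for every vertex, one has $\rad I(G,\w)=I(G)$, where $G$ is the underlying simple graph; hence $\dim S/I(G,\w)=\dim S/I(G)=\alpha(G)$, the independence number, so the dimension coordinate is always that of a connected simple graph and lies in $\{1,\dots,n-1\}$, with the extreme value $n-1$ occurring exactly when $G$ is a star. Second, taking all weights equal to $1$ gives $I(G,\w)=I(G)$ regardless of the orientation, so every connected simple graph occurs as a weighted oriented graph; this yields the containments $\ddUnw{n}\subseteq\dd{n}$ and $\ddrUnw{n}\subseteq\ddr{n}$ for free.

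For the $\supseteq$ inclusions, the simple-graph part is handled by the second observation, so the work is to realize the depth-$0$ strata. For each $b$ with $1\le b\le n-2$ I would build a connected weighted oriented graph on $n$ vertices with $\depth S/I(G,\w)=0$ and $\dim=b$: start from a minimal depth-$0$ gadget certified by \cref{thm:depth-zero-wo-graphs} (for instance the cyclically oriented triangle with all weights $2$, whose colon by $x_1x_2x_3$ is the maximal ideal), and attach trees and cliques to reach $n$ vertices and to tune the independence number to any value in $\{1,\dots,n-2\}$, with \cref{thm:depth-zero-wo-graphs} guaranteeing that depth $0$ persists. To obtain the regularity values in the positive-depth region, I would first note that every pair in $\ddUnw{n}$ is realized by a connected simple graph whose edge ideal has a linear resolution, hence regularity $1$ (visible in Kanno's explicit realizing graphs); then increasing a single well-chosen weight raises regularity through every larger value while leaving depth and dimension unchanged, the prototype being the oriented edge with $I=(x_1x_2^{\,c})$ and $\reg S/I=c$. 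This produces $(a,b,c)$ for every $(a,b)\in\ddUnw{n}$ and every $c\ge 1$; the same weight-pumping applied to the depth-$0$ gadgets, whose regularity is exactly $3$, realizes $(0,b,c)$ for every $c\ge 3$.

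For the $\subseteq$ inclusions, fix a connected weighted oriented graph $(G,\w)$ on $n$ vertices and set $(a,b)=(\depth S/I(G,\w),\dim S/I(G,\w))$. The dimension reduction already gives $1\le b\le n-1$. If $a=0$, then \cref{thm:depth-zero-wo-graphs} constrains the structure of $(G,\w)$: the underlying graph cannot be a star, so $b=\alpha(G)\le n-2$, and the forced configuration contains an induced weighted oriented subgraph of regularity at least $3$ (as for the oriented weighted triangle, whose resolution already places a first syzygy in degree $5$), which by monotonicity of regularity under passing to induced weighted oriented subgraphs gives $\reg S/I(G,\w)\ge 3$. This lands such triples in the depth-$0$ stratum of the claimed description.

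The remaining, and hardest, case is $a\ge 1$, where I must show $(a,b)\in\ddUnw{n}$; for the regularity statement nothing further is needed there, since the third coordinate ranges freely. By Kanno's characterization \cite{kanno2023tuples} this amounts to verifying $a\le b+1-\lceil b/(n-b)\rceil$, the constraints $1\le a\le b\le n-1$ being already in hand. My plan is to prove the comparison $\depth S/I(G,\w)\le \depth S/I(G)$: since $\dim S/I(G,\w)=\dim S/I(G)$ and $\ddUnw{n}$ is downward closed in the depth coordinate (lowering $a$ to any $a'\ge 1$ keeps $(a',b)$ inside the region), the pair for the weighted oriented graph then inherits membership in $\ddUnw{n}$ from that of its underlying graph. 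Establishing this depth comparison is the main obstacle. I expect to approach it either by polarizing $I(G,\w)$ to a squarefree ideal, which preserves projective dimension, and bounding $\pd$ of the polarization from below in terms of $G$, or by peeling off weights one unit at a time through the short exact sequences attached to a high-weight vertex $x_j$ and controlling depth via $(I(G,\w):x_j)$ and $(I(G,\w),x_j)$, inducting down to the all-weights-$1$ case. Either route must interface with \cref{thm:depth-zero-wo-graphs} to handle the possibility that an intermediate weight reduction crosses into depth $0$.
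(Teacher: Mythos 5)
Your skeleton matches the paper's proof almost step for step: the radical/dimension reduction and the star characterization, the all-weights-one embedding, Kanno's theorem plus downward-closedness of his region for the positive-depth pairs, triangle-based depth-zero gadgets with attached cliques and leaves for the stratum $\{(0,b)\}$, restriction to an induced weighted cycle for the bound $\reg \geq 3$, and regularity-one realizing graphs plus weight-pumping (the paper's \cref{lem:regularity}, packaged as \cref{lem:construct-new-graph}) for $\ddUnw{n}\times\NN$. However, two steps are genuinely incomplete. The first you flag yourself: the comparison $\depth S/I(G,\w)\le \depth S/I(G)$. The paper does not prove this at all; it quotes \cite[Observation 37]{casiday2021betti} (\cref{prop:depth(G w)-leq-depth(G)}), which bounds the Betti numbers of $I(G)$ by those of any weighted oriented version. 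Your two proposed routes (polarization, or peeling weights via short exact sequences) are plausible directions, but as written this remains a declared obstacle rather than an argument, and it is the hinge on which the entire inclusion $\dd{n}\subseteq \ddUnw{n}\cup\{(0,b)\}$ turns.

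The second gap is more serious because the mechanism you invoke does not apply. For the triples $(0,b,c)$ you propose to use ``the same weight-pumping'' on the depth-zero gadgets, asserting their regularity is exactly $3$ and that raising one weight sweeps out all $c\ge 3$. But \cref{lem:regularity} is stated and proved for \emph{squarefree} ideals: the proof needs every multidegree with a nonzero Betti number to have $x_i$-degree at most one, so that the substitution $x_i\mapsto x_i^{r+1}$ shifts degrees by exactly $r$. The depth-zero gadgets are never squarefree, since \cref{thm:depth-zero-wo-graphs} forces weight at least $2$ on every cycle vertex, and the additive shift genuinely fails for such ideals (substituting $x_1\mapsto x_1^{2}$ in $(x_1^2x_2)$ raises regularity by $2$, not $1$). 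Worse, in the case $b=1$ the gadget must have independence number $1$, i.e.\ its underlying graph is complete, so every vertex lies on at least two edges and, for $n=3$, every vertex occurs in one generator to the power $\ge 2$; no substitution trick is available there at all, because raising a weight changes only the generator whose head is that vertex and is therefore not a variable substitution. The paper circumvents this by building the regularity parameter into the family from the start ($\w(x_1)=2+r$ in \cref{special-wographs}) and computing $\reg = 3+r$ directly: the base case is the minimal Taylor resolution formula (\cref{prop:reg-Taylor} via \cref{cor:regularity-pseudo-forest}), and the clique vertices are added one at a time by a Betti splitting (\cref{thm:betti-splittings}) in \cref{prop:reg-complete-graphs-with-leaves}. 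You would need an argument of this kind, together with a proof (not an assertion) that your gadgets have regularity exactly $3$, to close this part of the proof.
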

        
        As another application of our techniques, we compute all the possible Betti table sizes of edge ideals of weighted oriented trees and (connected) bipartite graphs (see \cref{subsec:graph-theory} for graph-theoretic definitions). Given a homogeneous ideal $I$ of $S$, the Betti table of $S/I$ records all of its Betti numbers; its length is equal to the \emph{projective dimension} of $S/I$, denoted $\pd S/I$, and its width is equal to the \emph{regularity} of $S/I$, denoted $\reg S/I$. Erey and Hibi \cite{EH21} determined all possible Betti table sizes of edge ideals of trees and (connected) bipartite graphs, i.e., they determined the sets
        \begin{align*}
            \bptUnw{n} &\coloneqq \!\begin{multlined}[t]
                \{ (p,r)\in \mathbb{Z}^2\colon \text{there exists a bipartite graph $G$ on $n$ vertices} \\
                \text{with $\pd S/I(G) = p$ and $\reg S/I(G) = r$} \};
            \end{multlined}\\
            \treeUnw{n} &\coloneqq \!\begin{multlined}[t]
                \{ (p,r)\in \mathbb{Z}^2\colon \text{there exists a tree $G$ on $n$ vertices} \\
                \text{with $\pd S/I(G) = p$ and $\reg S/I(G) = r$} \}.
            \end{multlined}
        \intertext{We define an analog for weighted oriented graphs:}
            \bpt{n} &\coloneqq \!\begin{multlined}[t]
                \{ (p,r)\in \mathbb{Z}^2\colon \text{there exists a weighted oriented bipartite graph $G$}\\
                \text{on $n$ vertices with $\pd S/I(G)= p$ and $\reg S/I(G) = r$} \},
            \end{multlined}\\
            \tree{n} &\coloneqq \!\begin{multlined}[t]
                \{ (p,r)\in \mathbb{Z}^2\colon \text{there exists a weighted oriented tree $G$} \\
                \text{on $n$ vertices with $\pd S/I(G)= p$ and $\reg S/I(G) = r$} \}.
            \end{multlined}
        \end{align*}
    
        Our second main result is the following explicit description of $\bpt{n}$ and $\tree{n}$ for $n \geq 4$. 
        \begin{theorem}[{\cref{thm:bipartite-wo} and \cref{thm:tree-wo}}] \label{thm:main-thm-2}
            For $n\geq 4$, we have:
            \begin{enumerate}
                \item \(\bpt{n} = \{(p,r) \in \NN\colon \lceil \frac{n}{2} \rceil \leq p \leq n - 1\} \cup \{(n, r) \colon r \geq 4\}\);
                \item \(\tree{n} = \{\lceil \frac{n}{2} \rceil, \lceil \frac{n}{2} \rceil + 1, \dots, n - 1\} \times \NN\).
            \end{enumerate}
        \end{theorem}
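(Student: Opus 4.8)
The plan is to prove both descriptions at once, using that every tree is bipartite so that the family realizing the common block of $\tree{n}$ simultaneously populates $\bpt{n}$, and translating everything through Auslander--Buchsbaum, $\pd S/I(G,\w) = n - \depth S/I(G,\w)$, so that the projective dimensions $\lceil n/2\rceil \le p \le n-1$ and $p = n$ correspond respectively to $1 \le \depth \le \lfloor n/2\rfloor$ and $\depth = 0$. For each set I would separate a realizability half ($\supseteq$) from a constraint half ($\subseteq$).

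For the lower bound $p \ge \lceil n/2\rceil$, which both sets need, I would argue directly. For any associated prime $\mathfrak p$ of $S/I(G,\w)$ one has $\pd S/I(G,\w) \ge \height \mathfrak p$ (localize at $\mathfrak p$ and apply Auslander--Buchsbaum), hence $\pd S/I(G,\w) \ge \operatorname{bight} I(G,\w) \ge \operatorname{bight} I(G)$, the last step because $I(G,\w)$ and its radical $I(G)$ share the same minimal primes. Since $\operatorname{bight} I(G) = n - i(G)$, where $i(G)$ is the minimum size of a maximal independent set, the classical bound $i(G) \le \lfloor n/2\rfloor$ for a connected graph on $n \ge 2$ vertices yields $\pd \ge \lceil n/2\rceil$. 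To realize the common block I would use one family of weighted oriented trees: unweighted, a star and the double stars already sweep $\lceil n/2\rceil \le p \le n-1$ with $\reg = 1$, since a double star with parts of sizes $s,t$ has diameter $\le 3$ (so $\reg S/I = 1$) and a short independent-domination computation gives $\pd = 1 + \max(s,t)$, which ranges over $[\lceil n/2\rceil, n-2]$ as $s+t = n-2$, while the star contributes $p = n-1$. To climb to an arbitrary $r \ge 1$ I would orient the pendant edge at a fixed leaf $v$ toward $v$ and set $\w(v) = r$ (all other weights $1$); the key lemma to isolate is that this leaves $\pd$ unchanged and forces $\reg S/I(G,\w) = r$, which I would prove once by polarizing $I(G,\w)$ and peeling off the star at $v$ with a mapping cone, or from a regularity formula for weighted oriented forests.

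For the bipartite-only block $p = n$, $r \ge 4$, the constraint $p \le n-1$ for trees follows from \cref{thm:depth-zero-wo-graphs}: the configuration it requires for $\depth = 0$ contains a cycle, which a tree cannot, so a weighted oriented tree always has $\depth \ge 1$. For realizability the base gadget is the $4$-cycle with all weights $2$ and a cyclic orientation, $I = (x_1x_2^2, x_2x_3^2, x_3x_4^2, x_4x_1^2)$, where $m = x_1x_2x_3x_4$ lies in $(I :_S \mathfrak m)\setminus I$, so $\depth = 0$, $\pd = n$, and $\dim S/I = 2 = n-2$; I expect $\reg = 4$ here, and augmenting this to a connected bipartite graph on $n$ vertices preserving $\depth = 0$ (using the depth-$0$ constructions behind \cref{thm:main-thm-1}, refined to be bipartite) and inflating one weight as above produces every $(n,r)$ with $r \ge 4$. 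The floor $\reg \ge 4$ in the depth-$0$ bipartite case I would extract from the socle: a witness $m \in (I :_S \mathfrak m)\setminus I$ gives a nonzero class of $H^0_{\mathfrak m}(S/I)$ in degree $\deg m$, whence $\reg S/I \ge \deg m$, and the minimal such $m$ has support tracing a cycle of $G$; as $G$ is bipartite its girth is at least $4$, forcing $\deg m \ge 4$.

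The step I expect to be the main obstacle is exactly this regularity floor together with the claim that the $4$-cycle gadget attains it: pinning down how \cref{thm:depth-zero-wo-graphs} encodes a cycle in the support of a socle witness, and converting ``bipartite, hence girth $\ge 4$'' into the sharp bound $\deg m \ge 4$ (rather than the value $3$ that is available in the non-bipartite setting of \cref{thm:main-thm-1}), is where the two hypotheses genuinely interact. The secondary technical point is the weight-inflation lemma --- that raising $\w(v)$ at a leaf preserves $\pd$ while increasing $\reg$ by one --- which I would establish once via polarization and then reuse in both the tree and the bipartite constructions.
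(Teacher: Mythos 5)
Your overall architecture (a constraint half and a realizability half for each set, with trees doing double duty inside $\bpt{n}$) matches the theorem, and both of your constraint arguments take genuinely different routes from the paper's. The paper obtains $p \ge \lceil n/2 \rceil$ and $r \ge 1$ from weight-monotonicity of $\pd$ and $\reg$ (\cite[Observation 37]{casiday2021betti}) combined with the Erey--Hibi classification (\cref{thm:bpt-and-tree}), and it obtains $r \ge 4$ when $p = n$ by extracting from \cref{thm:depth-zero-wo-graphs} a naturally oriented cycle with all weights at least $2$, taking it induced (hence of even length at least $4$), and applying the Restriction Lemma together with \cref{lem:reg-cycles}. You instead bound $p$ below via $\pd S/I(G,\w) \ge \operatorname{bight} I(G,\w) = \operatorname{bight} I(G)$ plus an independent-domination bound, and bound $r$ below via a socle witness; for realizability you build the regularity-one trees by hand (stars and double stars) rather than quoting Erey--Hibi, and your leaf-inflation lemma is essentially the paper's \cref{lem:regularity}/\cref{lem:construct-new-graph}: it is valid because a regularity-one edge ideal has a linear resolution, so every Betti multidegree is extremal and inflating any vertex raises $\reg$ by exactly the weight increment while fixing $\pd$. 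Your route is more self-contained; the paper's is shorter because Erey--Hibi and the minimal-Taylor-resolution formula (\cref{cor:regularity-pseudo-forest}) do the heavy lifting.

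Three concrete repairs are needed. First, the ``classical bound'' $i(G) \le \lfloor n/2 \rfloor$ is \emph{false} for general connected graphs: for $K_3$ with two pendant leaves attached to each vertex ($n = 9$), every maximal independent set has size at least $5$. The bound is true for connected bipartite graphs, because each side of the bipartition is a maximal independent set; since you only ever apply it to bipartite graphs and trees, your inequality chain survives, but the justification must invoke bipartiteness, not connectivity. Second, ``diameter $\le 3$, so $\reg S/I = 1$'' is not a valid inference: $C_5$ has diameter $2$ and $\reg S/I(C_5) = 2$. For double stars argue instead that trees are chordal with induced matching number $1$, or use Fr\"oberg's theorem \cite{Froberg} on the complement.

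Third, the step you flag as the main obstacle does close, and more cleanly than you anticipate. If $m \in (I(G,\w) : \mathfrak m) \setminus I(G,\w)$ and $x \in \operatorname{supp}(m)$, pick a generator $g = uv^{\w(v)}$ dividing $xm$ but not $m$; comparing exponents forces $\deg_x g = \deg_x m + 1 \ge 2$, hence $x = v$ and $\w(x)\ge 2$, while $\deg_u g = 1 \le \deg_u m$, so $u \in \operatorname{supp}(m)$. Thus every vertex of $\operatorname{supp}(m)$ has an in-neighbor inside $\operatorname{supp}(m)$; iterating inside the finite set $\operatorname{supp}(m)$ produces a directed cycle of $G$ supported on $\operatorname{supp}(m)$ (a closed walk of length $2$ is impossible in an oriented simple graph), and bipartiteness then gives $\deg m \ge \lvert \operatorname{supp}(m) \rvert \ge 4$. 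Finally, you need not ``expect'' $\reg = 4$ for your gadget, nor set up a separate augmentation-plus-inflation step: attach $n-4$ weight-one leaves to a naturally oriented $4$-cycle with weights $(2,2,2,r-2)$, orienting the pendant edges away from the cycle. This is a naturally oriented maximal pseudo-forest satisfying the hypotheses of \cref{cor:regularity-pseudo-forest}, so $\pd = \lvert E \rvert = n$ and $\reg = \sum_{v} \w(v) - n = r$ exactly; this is in fact the paper's own construction for the block $\{(n,r)\colon r \ge 4\}$.
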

        
        We note that it is straightforward to determine these sets when $n < 4$; for these values of $n$, there is exactly one connected bipartite graph and one tree on $n$ vertices. However, these cases require different formulae, and so we will not consider them.

        The paper is structured as follows. In \cref{sec:prelims}, we provide background information on monomial ideals, graph theory, and the tools needed in the remainder of the paper. In \cref{sec:wog-depth-0}, we provide a full characterization of weighted oriented graphs whose edge ideals have depth $0$. We then prove \cref{thm:main-thm-1}, which we divide into \cref{thm:dd-wo} and \cref{thm:ddr-wo}, in \cref{sec:triples}. Finally, \cref{sec:betti-tables} is dedicated to proving \cref{thm:main-thm-2}.

    \section*{Acknowledgements}
    
        The first author acknowledges supports by Infosys Foundation and the NSF grants DMS 2001368, 1801285, and 2101671. We would like to thank Professors Huy T\`{a}i H\`{a} and Adam Van Tuyl for the discussions regarding \cref{lem:regularity}. The second and third authors were supported by the Mathematics Department REU at the University of Utah and the NSF RTG Grant $\#1840190$. Finally, we all thank Dr. Tim Tribone for his mentorship throughout this project.

\section{Preliminaries}\label{sec:prelims}

    Let $I$ be a monomial ideal of the polynomial ring $S = \Bbbk[x_1, \dots, x_n]$. The \emph{height} of $I$, denoted $\height I$, can be computed using the formula  \[\height I = \min \{t: \text{ there exist } 1 \leq i_1 < \dots < i_t \leq n \text{ such that } I \subseteq (x_{i_1}, \dots, x_{i_t}) \}.\]
    The \emph{dimension} of the quotient ring $S / I$ is given by 
    \[
        \dim S/I = \dim S - \height I = n - \height I.
    \]
    The definitions of the other homological invariants we will consider, namely Betti numbers, projective dimension, regularity, and depth, rely on the notion of a \emph{minimal free resolution}. 

    \begin{definition}
        Let $\mc F = \cdots \xrightarrow{\partial} F_1 \xrightarrow{\partial} F_0 \to 0$ be a free resolution of the quotient ring $S/I$. We say that $\mc F$ is a \emph{minimal free resolution} if $\partial(F_i) \subseteq \mf m F_{i-1}$ for all $i$, where $\mf m = (x_1, \dots, x_n)$ is the irrelevant ideal. In other words, if we choose bases for the $F_i$ and represent the differential maps as matrices, all matrix entries lie in~$\mf m$. 
    \end{definition}

    It can be shown that minimal free resolutions exist for any module $S/I$ where $I$ is a monomial ideal, and are unique up to isomorphism. We define the \emph{projective dimension} of $S/I$, denoted by $\pd S/I$, to be the largest $i$ such that $F_i \neq 0$. We define the \emph{depth} of $S/I$ to be complementary to its projective dimension: \[\depth S/I = n-\pd S/I.\]
    We note that depth is traditionally defined differently; the above equality is called the Auslander-Buchsbaum formula.
    
    Now, for a monomial $m$, let $a(m)$ denote its exponent vector. Since $I$ is a monomial ideal, the module $S/I$, and thus its minimal free resolution $\mathcal{F}=(F_i)_{i\in \mathbb{Z}}$, is $\mathbb{N}^n$-graded, i.e., 
    \[
        F_i \cong \!\! \bigoplus_{m \in \mc M} S(-a(m))^{\beta_{i,a(m)}(S/I)},
    \]
    where $\mc M$ denotes the set of all monomials and $S(-a(m))$ denotes the free module $S$ shifted by $-a(m)$. The integers $\beta_{i,a(m)}(S/I)$ are called the \emph{multi-graded Betti numbers} of the module $S/I$.
    
    Finally, the \emph{regularity} of $S/I$ is defined to be
    \[
        \reg S/I = \max \{\deg m - i\colon\beta_{i,a(m)}(S/I) \neq 0\}.
    \]

    We recall a universal construction of a free resolution of $S/I$ for any monomial ideal $I$ with $\mingens(I) = \{m_1, \dots, m_q\}$. Let 
    \[
        \mathcal{T}_I \colon 0 \to T_q \to \cdots \to T_0 \to 0
    \]
    be the chain complex of $S$-modules, where 
    \[
        T_i = \!\!\!\! \bigoplus_{\substack{\sigma \subseteq \mingens(I)\\ \lvert \sigma \rvert = i}} \!\!\!\! Se_{\sigma}
    \]
    and the differentials are 
    \[
        \partial(e_\sigma) = \sum_{j = 1}^t (-1)^{j + 1} \frac{\lcm(\sigma)}{\lcm(\sigma\setminus \{m_{i_j}\})} e_{\sigma \setminus \{m_{i_j}\}},
    \]
    where $\sigma=\{m_{i_1}, \dots, m_{i_t}\}$ for some $1\leq i_1 < i_2 < \cdots < i_t \leq q$. Here, to make the complex $\mathbb{N}^n$-graded, we make the shift $Se_{\sigma} = S(-a(\lcm(\sigma)))$. Taylor \cite{Tay66} showed that the complex \(\mathcal{T}_I\) is a free resolution of \(S/I\); thus it is called the \textit{Taylor resolution} of \(S/I\).

    When the Taylor resolution of $S/I$ is minimal, its regularity can be computed by a simple formula. As we could not find a reference for such a formula, we prove it here.

    \begin{proposition}\label{prop:reg-Taylor}
        Let $I$ be a monomial ideal with $\mingens(I)=\{m_1, \dots, m_q\}$. If the Taylor resolution of $S/I$ is minimal, then 
        \[
            \pdim S/I = q
        \]
        and
        \[
            \reg S/I = \deg \left(\lcm(\mingens(I)) \right) - q.
        \]
    \end{proposition}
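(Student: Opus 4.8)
The plan is to exploit the fact that, by hypothesis, the Taylor resolution $\mathcal{T}_I$ \emph{is} the minimal free resolution of $S/I$, so that its multigraded Betti numbers can be read off directly from the ranks of the modules $T_i$. Concretely, each subset $\sigma \subseteq \mingens(I)$ contributes a free summand $Se_\sigma = S(-a(\lcm(\sigma)))$ in homological degree $\lvert \sigma \rvert$, and minimality means there is no cancellation; hence $\beta_{\lvert \sigma \rvert,\, a(\lcm(\sigma))}(S/I) \neq 0$ for every such $\sigma$, and these account for all the nonzero multigraded Betti numbers. For the projective dimension I would simply note that the top module $T_q$ is the rank-one summand indexed by the single subset $\sigma = \mingens(I)$ of size $q$, while $T_i = 0$ for $i > q$; since the resolution is minimal and $T_q \neq 0$, this forces $\pdim S/I = q$ at once.

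For the regularity, combining the formula $\reg S/I = \max\{\deg m - i \colon \beta_{i,a(m)}(S/I) \neq 0\}$ with the description of the Betti numbers above yields
\[
    \reg S/I = \max_{\sigma \subseteq \mingens(I)} \bigl( \deg(\lcm(\sigma)) - \lvert \sigma \rvert \bigr).
\]
It then remains to show that this maximum is attained at $\sigma = \mingens(I)$, which I expect to be the crux of the argument. The key is to convert the minimality hypothesis into a quantitative statement: the differential of $\mathcal{T}_I$ lands in $\mf m\, T_{i-1}$ precisely when, for every $\sigma$ and every $m \in \sigma$, the coefficient $\lcm(\sigma)/\lcm(\sigma \setminus \{m\})$ is a non-unit, i.e. $\lcm(\sigma \setminus \{m\})$ strictly divides $\lcm(\sigma)$. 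Equivalently, enlarging a subset by one generator raises the degree of its lcm by at least $1$.

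Given this, for an arbitrary $\sigma$ I would fix a chain $\sigma = \sigma_0 \subsetneq \sigma_1 \subsetneq \cdots \subsetneq \sigma_{q - \lvert \sigma \rvert} = \mingens(I)$ that adjoins one generator at each step and telescope the per-step inequalities to obtain $\deg(\lcm(\mingens(I))) - \deg(\lcm(\sigma)) \geq q - \lvert \sigma \rvert$, which rearranges to $\deg(\lcm(\sigma)) - \lvert \sigma \rvert \leq \deg(\lcm(\mingens(I))) - q$. This shows the displayed maximum equals $\deg(\lcm(\mingens(I))) - q$, giving the stated regularity formula. The main obstacle is thus concentrated entirely in extracting the per-step degree increase from minimality and verifying the telescoping bound; everything else is bookkeeping on the Taylor complex.
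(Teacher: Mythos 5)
Your proposal is correct and follows essentially the same route as the paper: both read off the multigraded Betti numbers from the Taylor complex using minimality, get $\pdim S/I = q$ from the nonvanishing top term, and prove the regularity bound by observing that minimality forces each coefficient $\lcm(\sigma \cup \{m\})/\lcm(\sigma)$ to be a non-unit, then telescoping the resulting per-step degree inequality along a chain of subsets from $\sigma$ up to $\mingens(I)$. The only cosmetic difference is that the paper telescopes along the specific chain $\{m_1\} \subset \{m_1,m_2\} \subset \cdots \subset \mingens(I)$ after relabeling, whereas you phrase it for an arbitrary saturated chain; the argument is identical.
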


    \begin{proof}
        The projective dimension follows from the definition of the Taylor resolution. We will compute the regularity. By definition, we need to show
        \begin{equation*}
            \max \{\deg m - i \colon \beta_{i, a(m)}(S/I) \neq 0\} = \deg \left(\lcm(\mingens(I)) \right) - q.
        \end{equation*}
        We note that the last nonzero term of the Taylor resolution $\mathcal{T}_I$ is 
        \[
            T_q = S(-a(\lcm(\mingens(I)))),
        \]
        and, since this Taylor resolution is minimal by hypothesis, 
        \[
            \beta_{q, a(\lcm(\mingens(I)))}(S/I) = 1 \neq 0.
        \]
        Thus, \(\deg \left( \lcm(\mingens(I)) \right) - q \in  \{\deg m - i \colon \beta_{i, a(m)}(S/I) \neq 0\}\).

        It now suffices to show that if $\beta_{i, a(m)}(S/I)\neq 0$ for some monomial $m$ and integer $i$, we have 
        \begin{equation}\label{reg-Taylor}
            \deg m - i \leq \deg \left( \lcm(\mingens(I)) \right) - q. 
        \end{equation}
        Indeed, since the Taylor resolution $\mathcal{T}_I$ is minimal, there exists $\sigma \subseteq \mingens(I)$ such that $m = \lcm(\sigma)$ and $i = \lvert \sigma \rvert \leq \lvert \mingens(I) \rvert = q$. Without loss of generality, assume that $\sigma = \{m_1, \dots, m_i\}$. If $i = q$, then \cref{reg-Taylor} is vacuously true. Now assume that $i < q$. For any $s \in [q - 1]$, the coefficient $\lcm(m_1, \dots, m_{s}, m_{s + 1})/$ $\lcm(m_1, \dots, m_s)$ appears in the Taylor resolution $\mathcal{T}_I$. Since $\mathcal{T}_I$ is minimal, the quotient $\lcm(m_1, \dots, m_{s}, m_{s + 1})/$ $\lcm(m_1, \dots, m_s)$ is not 1, and, in particular, we have 
        \[
            \deg \left( \lcm(m_1, \dots, m_{s}, m_{s + 1}) \right) \geq \deg \left( \lcm(m_1,\dots, m_{s})\right) + 1.
        \]
        Applying this inequality repeatedly, we obtain
        \begin{align*}
            \deg \left( \lcm(\mingens(I)) \right) - q &= \deg \left( \lcm(m_1,\dots, m_{q - 1}, m_{q}) \right) - q\\
            &\geq \deg \left(\lcm(m_1, \dots, m_{q - 1}) \right) - (q - 1)\\
            &\vdotswithin{\geq}\\
            &\geq \deg \left(\lcm(m_1, \dots, m_{i}) \right) - i\\
            &= \deg \left(\lcm(\sigma) \right) - i, 
        \end{align*}
        as desired.
    \end{proof} 

Next we present a result that helps us manipulate regularity of monomial ideals.

\begin{lemma}\label{lem:regularity}
    Let $I$ be a squarefree monomial ideal and $r\geq 0$ an integer. Then there exists a variable $x$ such that after replacing $x$ with $x^{r+1}$ for some integer $n$, we obtain a monomial ideal $J$ (in the same polynomial ring) such that
    \[\pdim S/J = \pdim S/I\]
    and 
    \[
    \reg S/J = \reg S/I + r.
    \]
\end{lemma}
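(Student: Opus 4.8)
The plan is to realize the substitution $x \mapsto x^{r+1}$ as a monomial ring map $\phi$ and to exploit the fact that it leaves the \emph{lcm lattice} of the ideal unchanged, so that the whole minimal resolution is governed by the same combinatorial data and only the grading is shifted. Write $\mingens(I) = \{m_1, \dots, m_q\}$, let $\phi$ send $x$ to $x^{r+1}$ and fix every other variable, and set $J = \phi(I) = (\phi(m_1), \dots, \phi(m_q))$, which lives in the same ring $S$. First I would check that $\phi$ preserves divisibility among squarefree monomials: since $\phi$ only rescales the exponent of $x$ (from $0$ or $1$ to $0$ or $r+1$) while the remaining exponents stay in $\{0,1\}$, for squarefree $m,m'$ we have $m \mid m' \iff \phi(m) \mid \phi(m')$. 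Hence no $\phi(m_i)$ divides another, so $\mingens(J) = \{\phi(m_1), \dots, \phi(m_q)\}$ and $|\mingens(J)| = q$.

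Next I would verify that $\phi$ commutes with least common multiples, i.e. $\phi(\lcm(\sigma)) = \lcm(\phi(\sigma))$ for every $\sigma \subseteq \mingens(I)$, which is immediate by comparing the exponent of each variable and using that every element of the lcm lattice $L_I$ is squarefree. Thus $\phi$ restricts to an isomorphism of lcm lattices $L_I \xrightarrow{\ \sim\ } L_J$. By the theorem of Gasharov--Peeva--Welker, the multigraded Betti numbers of a monomial quotient are determined by its lcm lattice through the reduced homology of open intervals; the isomorphism $\phi$ therefore transports them, giving
\[
    \beta_{i, \phi(m)}(S/J) = \beta_{i, m}(S/I) \quad\text{for all } i \text{ and all } m \in L_I,
\]
while $\beta_{i, m'}(S/J) = 0$ for $m' \notin L_J$. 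Summing over multidegrees at a fixed homological degree yields $\beta_i(S/J) = \beta_i(S/I)$ for every $i$, whence $\pdim S/J = \pdim S/I$.

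It remains to track degrees. For $m \in L_I$ (necessarily squarefree), write $e_x(m) \in \{0,1\}$ for the exponent of $x$ in $m$; then $\deg \phi(m) = \deg m + r\, e_x(m)$. Combining this with the Betti correspondence,
\[
    \reg S/J = \max\bigl\{ \deg m - i + r\, e_x(m) : \beta_{i,m}(S/I) \neq 0 \bigr\} \leq \reg S/I + r,
\]
since $\deg m - i \leq \reg S/I$ and $e_x(m) \leq 1$. To force equality I would choose $x$ carefully: pick a position $(i_0, m_0)$ with $\beta_{i_0, m_0}(S/I) \neq 0$ and $\deg m_0 - i_0 = \reg S/I$. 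The only Betti number in homological degree $0$ sits in multidegree $1$ and contributes $0$, while every minimal generator contributes $\deg(m_k) - 1 \geq 0$ in homological degree $1$; hence the maximum defining $\reg S/I$ is attained at some $(i_0,m_0)$ with $i_0 \geq 1$, so $m_0$ is a nonconstant monomial. Choosing $x$ to be any variable dividing $m_0$ gives $e_x(m_0) = 1$, so the displayed bound is attained at $(i_0,m_0)$ and $\reg S/J = \reg S/I + r$.

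The main technical point is the reduction to the lcm lattice: once the isomorphism $L_I \cong L_J$ is established, the Gasharov--Peeva--Welker description carries every multigraded Betti number across with a single explicit degree shift, and both conclusions follow formally. The only case requiring care is guaranteeing that a regularity-maximizing Betti position carries a nonconstant monomial, so that a dividing variable $x$ exists; this holds for any nonzero squarefree $I$ (the case $I = 0$ being vacuous). One could alternatively build a minimal resolution of $S/J$ directly from that of $S/I$ on the common lcm-lattice-supported frame, but invoking Gasharov--Peeva--Welker packages this step most cleanly.
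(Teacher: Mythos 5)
Your proof is correct, and it reaches the key intermediate fact by a genuinely different route than the paper. The paper's proof is a specialization argument: it passes to $S'=S[y]$, forms $I'$ from $I$ by replacing the chosen variable $x_i$ with $y$, checks that both $x_i-y$ and $x_i^{r+1}-y$ are regular elements on $S'/I'$, and cuts the minimal free resolution of $S'/I'$ down by each of them (via \cite[Proposition 1.1.5]{BH98}); this exhibits the minimal resolutions of $S/I$ and of $S/J_i$ as relabelings of one another and yields the multigraded Betti correspondence $\beta_{j,m}(S/I)=\beta_{j,mx_i^r}(S/J_i)$ when $x_i\mid m$, with equality in the same multidegree otherwise. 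You derive exactly the same correspondence from lcm-lattice rigidity instead: the substitution preserves divisibility and least common multiples of squarefree monomials, hence induces a lattice isomorphism $L_I\cong L_J$, and the Gasharov--Peeva--Welker theorem transports every multigraded Betti number across with the explicit shift $\deg\phi(m)=\deg m + r\,e_x(m)$. From that point on the two arguments coincide: both pick a regularity-maximizing position $(j_0,m_0)$ and then choose $x$ to divide $m_0$. Your route invokes a heavier (though standard) theorem in place of the paper's elementary regular-element computation; in exchange it avoids the auxiliary variable, and, notably, you justify a point the paper silently assumes, namely that the maximum defining $\reg S/I$ is attained at a position with $m_0\neq 1$ (homological degree at least $1$), so that a dividing variable exists. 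One small quibble: your remark that the case $I=0$ is ``vacuous'' is not quite right --- for $I=0$ and $r>0$ the conclusion actually fails, since no substitution changes the zero ideal --- so the honest statement is that the lemma implicitly assumes $I\neq 0$, an assumption the paper's proof also makes tacitly and which holds in every application in the paper.
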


\begin{proof}
    Let $S=\Bbbk[x_1,\dots, x_n]$ be the polynomial ring that contains $I$. Set $S'=S[y]$ where $y$ is a new variable. Fix an index $i$. Let $I'$ be a monomial ideal of $S'$ obtained from $I$ by replacing $x_i$ with $y$, and $J_i$ be the ideal obtained from $I$ by replacing $x_i$ with $x_i^{r+1}$.

    Let $\mathcal{F}$ be the minimal resolution of $S'/I'$ over $S'$. By considering a lex order where $x_i$ is the largest, it is clear that $x_i-y$ is a regular element of both $S'/I'$ and $S'$. Hence by
    \cite[Proposition 1.1.5]{BH98}, $\mathcal{F} \otimes_{S'} (S'/(x_i-y))$ is the minimal resolution of $S/I$ over $S$. Similarly, $x_i^{r+1}-y$ is also a regular element of both $S'/I'$ and $S'$. Hence by
    \cite[Proposition 1.1.5]{BH98} again, $\mathcal{F} \otimes_{S'} S'/(x_i^{r+1}-y)$ is the minimal resolution of $S/J_i$ over $S$. In particular, this means that $\pdim S/J_i = \pdim S/I$, and that the minimal free resolution of $S/I$ over $S$, after replacing $x_i$ with $x_i^{r+1}$ in all matrices, becomes that of $S/J_i$ over $S$. This gives us a direct relation between the multigraded Betti numbers of the two ideals:
    \[
    \beta_{j,m}(S/I) = \begin{cases}
        \beta_{j,mx_i^r}(S/J_i) & \text{ if } x_i\mid m,\\
        \beta_{j,m}(S/J_i) & \text{ otherwise},
    \end{cases}
    \]
    for any monomial $m$ and index $j$.
    
    Now we will show the regularity equality for a suitable choice of $i$. Set $\reg S/I = t$, i.e., there exists a monomial $m_0$ and an index $j_0$ such that
    
    \begin{enumerate}
        \item \(\beta_{j_0,m_0}(S/I)\neq 0\),
        \item \(\deg(m_0)-j_0 = t\),
        \item If \(\beta_{j,m}(S/I)\neq 0\), then \(\deg(m)-j \leq t\).
    \end{enumerate}

    Let $i$ be an index such that $x_i$ divides $m_0$. Without loss of generality, assume that $x_1$ divides $m_0$. We have the formulas for the Betti numbers of $J_1$:
    \[
    \beta_{j,m}(S/J_1) = \begin{cases}
        \beta_{j,m/x_1^r}(S/I) & \text{ if } x_1\mid m,\\
        \beta_{j,m}(S/I) & \text{ otherwise},
    \end{cases}
    \]
    for any monomial $m$ and index $j$. It is clear that $\beta_{j_0,m_0x_1^r}(S/J) \neq 0$, and 
    \[
    \deg(m_0x_1^r) -j_0 = \deg(m_0) + r - j_0 = t+r.
    \]
    Finally, assume that $\beta_{j,m}(S/J_1)\neq 0$ for some index $j$ and monomial $m$. It suffices to show that $\deg(m)-j \leq t+r$. Indeed, if $x_1\nmid m$, then by the Betti formulas, we have $\beta_{j,m}(S/I)\neq 0$, which implies that $\deg(m)-j\leq t\leq t+r$, as desired. On the other hand, if $x_1\mid m$, then $\beta_{j,m/x_i^r}(S/I)\neq 0$, which implies that 
    \[
    \deg(m)-j = (\deg(m/x_i^r) + r )-j = (\deg(m/x_i^r) -j )+r \leq t+r,   \]
    as desired.
\end{proof}

\subsection{Graph theory terminology}\label{subsec:graph-theory}

In this section we recall some special types of graphs.

A \emph{graph} $G$ consists of a \emph{vertex set} $V(G)$ and an \emph{edge set} $E(G)$, whose elements are sets of the form \(\{x_i, x_j\}\) for distinct \(x_i, x_j \in V(G)\). If \(\{x_i, x_j\} \in E(G)\), then \(x_i\) and \(x_j\) are connected by an edge in $G$. As a consequence of this definition, all graphs considered in this paper will be simple, i.e., no loops or repeated edges are allowed.

An \emph{oriented graph} is a graph $G$ in which each edge has an \emph{orientation}, i.e., a direction. A \emph{weighted oriented graph} is an oriented graph $G$ equipped with a weight function $\w$ which assigns to each vertex of $G$ a positive, integer-valued \emph{weight}. We will denote weighted oriented graphs by ordered pairs of the form $(G, \w)$. 

Given two graphs $G$ and $G'$, we say $G'$ is a \emph{subgraph} of $G$ if \(V(G') \subseteq V(G)\) and $E(G') \subseteq E(G)$. If any edge of $G$ connecting two vertices of $G'$ is an edge of $G'$, we further say $G'$ is an \emph{induced subgraph}. Given weighted oriented graphs $(G, \w)$ and $(G', \w')$, $(G', \w')$ a subgraph of $(G, \w)$ if
\begin{enumerate}
    \item $G'$ is a subgraph of $G$,
    \item the orientation of any edge in $G'$ agrees with its orientation in $G$, and
    \item $\w'$ is the restriction of $\w$ to $V(G')$.
\end{enumerate}
Finally, $(G', \w')$ is an induced subgraph of $(G, \w)$ if $G'$ is an induced subgraph of $G$. For convenience of notation, we will denote subgraphs of $(G,\w)$ by $(G',\w)$.

Let $C_n$ denote the cycle graph on $n$ vertices, where $n\geq 3$. A connected graph is called a \emph{tree} if it has no induced cycle, a \emph{unicyclic graph} if it has exactly one induced cycle, and a \emph{bipartite graph} if it does not have any induced odd cycle. In particular, trees are bipartite.

    \begin{figure}[ht!]
        \centering
        \begin{subfigure}{0.49\linewidth}
            \centering
            \includegraphics[width = 0.6\linewidth]{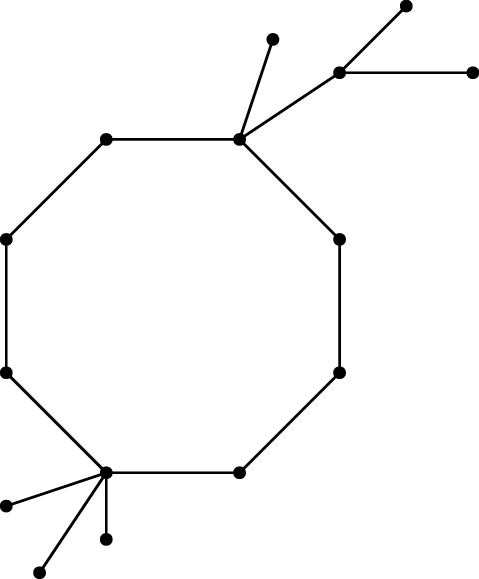}
        \end{subfigure}
        \begin{subfigure}{0.49\linewidth}
            \centering
            \includegraphics[width = 0.29\linewidth]{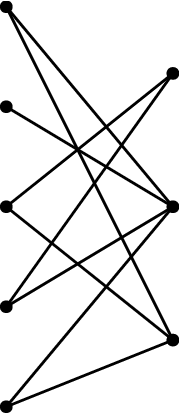}
        \end{subfigure}
        \caption{Examples of a unicyclic graph (left) and a bipartite graph (right).}
        \label{fig:ex-unicyclic-bipartite}
    \end{figure}

We call a graph a \emph{maximal pseudo-forest} if each of its connected components is a unicyclic graph.

Any tree $G$ can be considered \emph{rooted} at any of its vertices, i.e., vertices of $G$ can be labeled $\{x, x_{1,1},\dots, x_{1,n_1},\dots, x_{l,1},\dots, x_{l, n_l}\}$ for some integers $l, n_1,\dots, n_l$  such that the (unique) shortest path between $x_{i,j}$ and $x$ is of length $i$ for any $i$ and $j$. This is illustrated in \cref{fig:rooted-tree}.

    \begin{figure}[ht!]
        \centering
        \begin{subfigure}{0.49\linewidth}
            \centering
            \includegraphics[width = 0.49\linewidth]{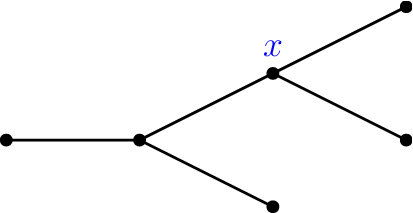}
        \end{subfigure}
        \begin{subfigure}{0.49\linewidth}
            \centering
            \includegraphics[width = 0.49\linewidth]{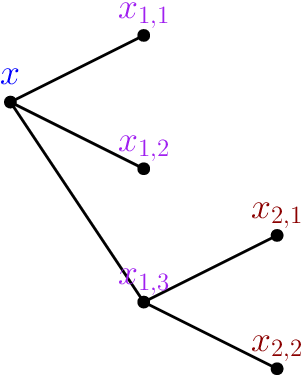}
        \end{subfigure}
        \caption{A tree (left) and the same tree as rooted at vertex \(x\) (right).}
        \label{fig:rooted-tree}
    \end{figure}

Recall that any unicyclic graph can be considered to be a cycle graph whose vertices are glued to trees, see e.g., \cite[Proof of Lemma 4]{unicyclic}. For an example, see \cref{fig:unicyclic-decomposition}.

    \begin{figure}[ht!]
        \centering
        \begin{subfigure}{0.49\linewidth}
            \centering
            \includegraphics[width = 0.49\linewidth]{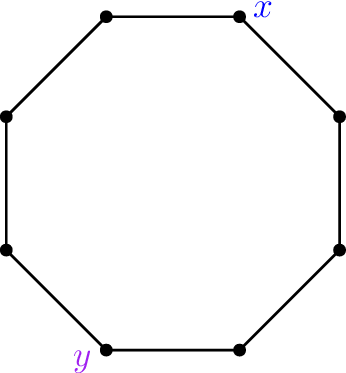}
        \end{subfigure}
        \begin{subfigure}{0.49\linewidth}
            \centering
            \includegraphics[width = 0.49\linewidth]{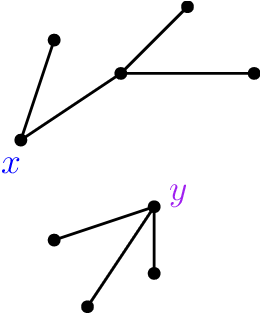}
        \end{subfigure}
        \caption{The decomposition of the unicyclic graph in \cref{fig:ex-unicyclic-bipartite} into a cycle and two trees.}
        \label{fig:unicyclic-decomposition}
    \end{figure}

We call an oriented unicyclic graph \emph{naturally oriented} if the edges of its (unique) induced cycle $C_n$ are oriented in one direction, and the edges of the trees glued to vertices of $C_n$ are oriented away from them. In other words, if we consider the trees to be rooted at vertices of $C_n$, then their edges are oriented away from the root. We illustrate these concepts in the following pictures.

    \begin{figure}[ht!]
        \centering
        \begin{subfigure}{0.51\linewidth}
            \centering
            \includegraphics[width = 0.51\linewidth]{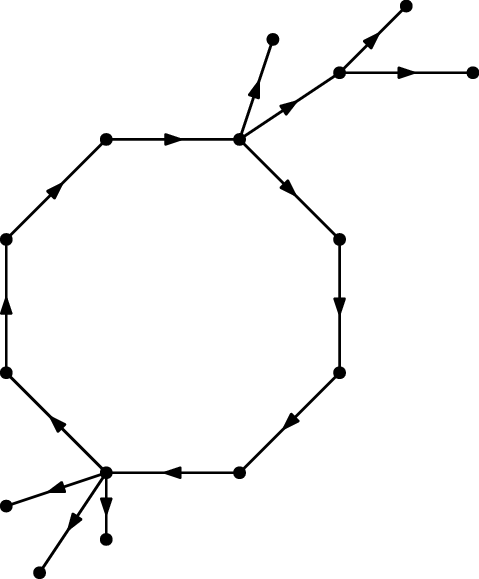}
        \end{subfigure}
        \begin{subfigure}{0.49\linewidth}
            \centering
            \includegraphics[width = 0.49\linewidth]{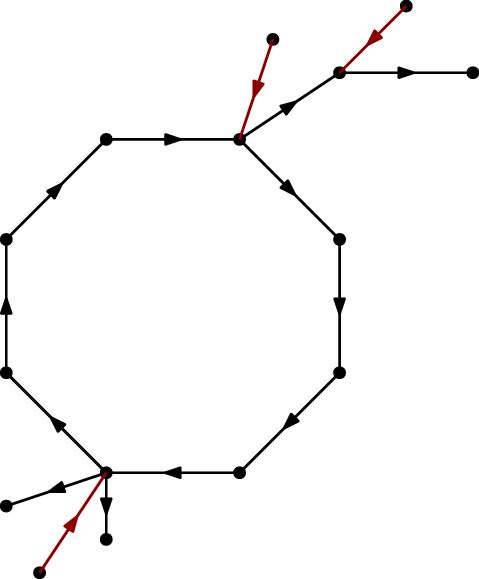}
        \end{subfigure}
        \begin{subfigure}{0.49\linewidth}
            \centering
            \includegraphics[width = 0.49\linewidth]{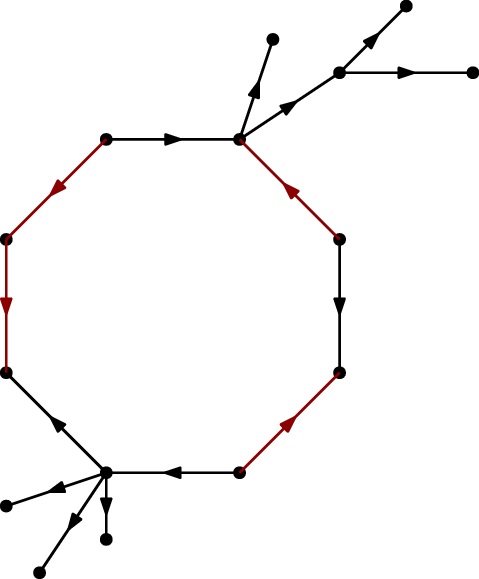}
        \end{subfigure}
        \caption{A naturally oriented unicyclic graph (above) and two ``unnatural'' orientations of the same graph (below).}
        \label{fig:unicyclic-orientation}
    \end{figure}

We call an oriented maximal pseudo-forest \emph{naturally oriented} if each of its connected components is a naturally oriented unicyclic graph.

\subsection{Edge ideals of weighted oriented graphs}\label{subsec:weighted-oriented}

For convenience, we recall some definitions from the introduction here. If $G$ is a graph on the vertices $x_1, \dots, x_n$, then $G$ defines a monomial ideal of the polynomial ring $S = \Bbbk[x_1, \dots, x_n]$ generated by the set
\[\{x_i x_j \colon \text{ the vertices } x_i \text{ and } x_j \text{ are joined by an edge in } G\}\]
We call this ideal the \emph{edge ideal} of $G$, and denote it by $I(G)$. A larger class of monomial ideals may be described using weighted oriented graphs, which are graphs with additional structure. 

Given a weighted oriented graph $(G, \w)$, we define the \emph{edge ideal} $I(G, \w)$ \emph{of} $(G, \w)$ to be the monomial ideal generated by 
\[\{x_i x_j^{\w(x_j)} \colon \text{ there is an edge oriented from } x_i \text{ to } x_j \text{ in } G\}.\]

\begin{example}
    \begin{figure}[ht!]
        \centering
        \begin{subfigure}{0.49\linewidth}
            \centering
            \includegraphics[width = 0.49\linewidth]{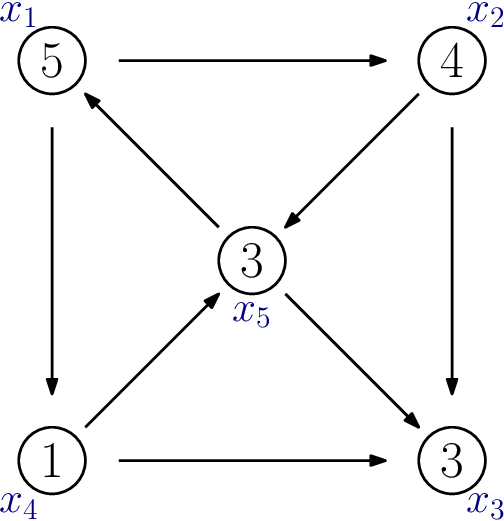}
        \end{subfigure}
        \begin{subfigure}{0.49\linewidth}
            \centering
            \includegraphics[width = 0.49\linewidth]{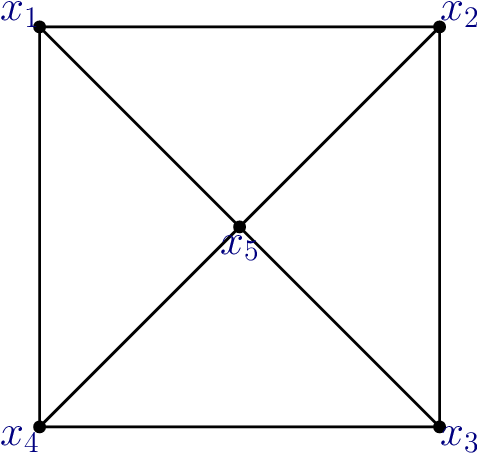}
        \end{subfigure}
        \caption{A weighted oriented graph (left) and its underlying base graph (right).}
        \label{fig:ex-wog}
    \end{figure}
    Let \((G,\w)\) refer to the weighted oriented graph in \cref{fig:ex-wog}. The edge ideal of \((G,\w) \) is 
    \[I(G, \w) = (x_1 x_2^4, x_1 x_4, x_2 x_3^3, x_2 x_5^3, x_4 x_3^3, x_4 x_5^3, x_5 x_1^5, x_5 x_3^3).\] 
    Notice that if all of the weights were 1, this edge ideal would be \((x_1 x_2, x_1 x_4, x_2 x_3,\) \(x_2 x_5, x_4 x_3, x_4 x_5, x_5 x_1, x_5 x_3)\), which is the same as the edge ideal of the base graph.
\end{example}

To simplify notation, we use $\depth(G, \w), \dim(G, \w), \reg(G, \w),$ and $\pdim(G, \w)$ to denote $\depth S/I(G, \w)$, $\dim S/I(G, \w)$, $\reg S/I(G, \w)$, and $\pdim S/I(G, \w)$, respectively. When we refer to the edge ideal $I(G)$, or equivalently, the edge ideal of the weighted oriented graph $I(G,\w)$ where $\w(v)=1$ for each $v\in V(G)$, we will simply use  $\depth G, \dim G, \reg G,$ and $ \pdim G$.

With this terminology, we now give a quick study on the relationship between these homological invariants of weighted oriented graphs and those of their base graphs.

    Our first observation is that the dimension remains unchanged. In other words, the weights on the graph $G$ do not affect the height of the corresponding edge ideal.

    \begin{lemma}[\protect{{\cite[Proposition 1.2.4]{HH11-book}}}]\label{lem:dim(G w)=dim(G)}
        If $(G,\w)$ is a weighted oriented graph, then $\sqrt{I(G,\w)} = I(G)$. In particular, $\dim (G,\w) = \dim G$.
    \end{lemma}

On the other hand, in general, we only have an inequality when it comes to depth and regularity.

    \begin{proposition}\label{prop:depth(G w)-leq-depth(G)}
        Let $(G,\w)$ be a weighted oriented graph. Then $\depth (G,\w) \leq \depth G$. 
    \end{proposition}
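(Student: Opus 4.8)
The plan is to pass to projective dimension and prove the equivalent inequality $\pd(G) \leq \pd(G,\w)$; since $\depth S/I = n - \pd S/I$ by definition, this is exactly the assertion $\depth(G,\w) \leq \depth G$. To compare the two projective dimensions I would invoke the lcm-lattice comparison of Gasharov--Peeva--Welker: a surjective join-preserving map of lcm lattices carrying atoms to atoms forces the (total) Betti numbers of the target to be bounded above by those of the source, and in particular makes the projective dimension of the target no larger. This fits the Taylor/lcm machinery already set up for \cref{prop:reg-Taylor}.

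First I would record the bijection $b$ between minimal generating sets that sends $x_i x_j^{\w(x_j)} \in I(G,\w)$ to the edge monomial $x_i x_j \in I(G)$; this is a genuine bijection precisely because weights change exponents but never change which two vertices an edge touches, and it identifies the atoms of the two lcm lattices $L_{I(G,\w)}$ and $L_{I(G)}$. I would then define $f\colon L_{I(G,\w)} \to L_{I(G)}$ by $f(\lcm(\sigma)) = \lcm(b(\sigma))$ for a subset $\sigma$ of generators. The key observation is that the \emph{unweighted} lcm depends only on the set of variables occurring, so $\lcm(b(\sigma))$ is the squarefree part of $\lcm(\sigma)$; that is, $\lcm(b(\sigma)) = \sqrt{\lcm(\sigma)}$.

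Using this identity I would check the three hypotheses of the comparison theorem. The map $f$ is well defined because $\lcm(\sigma) = \lcm(\tau)$ forces equal radicals, hence $\lcm(b(\sigma)) = \lcm(b(\tau))$. It is join-preserving because passing to the radical commutes with $\lcm$ (both amount to taking unions of supports), so $f\bigl(\lcm(\sigma)\vee\lcm(\tau)\bigr) = \sqrt{\lcm(\sigma\cup\tau)} = \sqrt{\lcm(\sigma)}\vee\sqrt{\lcm(\tau)}$. Finally $f$ is surjective and atom-preserving since $b$ is a bijection of generators, and $f$ sends $\hat 0 \mapsto \hat 0$. Applying the Gasharov--Peeva--Welker result with source $I(G,\w)$ and target $I(G)$ then gives $\pd S/I(G) \leq \pd S/I(G,\w)$, and Auslander--Buchsbaum converts this into $\depth(G,\w)\leq\depth G$.

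The heart of the argument, and the step I would be most careful with, is the compatibility of the radical operation with $\lcm$ that simultaneously yields well-definedness and the join-homomorphism property, together with invoking the comparison in the correct direction (the coarser lattice is the target and carries the smaller projective dimension). The hypothesis that the two ideals have the same number of generators, with atoms mapping to atoms, is automatic here and is exactly the feature special to the weighted oriented setting. I would also note that an alternative route through polarization is available, but the lcm-lattice argument is the cleanest and dovetails with the Taylor-resolution viewpoint of the preliminaries; if one prefers to avoid citing the comparison theorem by name, the same conclusion can be extracted directly from the interval-homology formula $\beta_{i,m}(S/I) = \dim_\Bbbk \tilde H_{i-2}\bigl((\hat 0, m)_{L_I}\bigr)$ applied along $f$.
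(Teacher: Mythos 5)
Your proof is correct, but it takes a genuinely different route from the paper, whose entire proof is a one-line citation: the inequality is outsourced to Observation 37 of \cite{casiday2021betti}, which asserts exactly the Betti-number comparison between $I(G)$ and $I(G,\w)$ (and is also what the paper later uses to get $\pdim G \leq \pdim(G,\w)$ and $\reg G \leq \reg (G,\w)$ in \cref{thm:bipartite-wo} and \cref{thm:tree-wo}). What you have written is a self-contained proof of that black-boxed fact, and the argument is sound: the radical map $m \mapsto \sqrt{m}$ lands in $L_{I(G)}$ because $\sqrt{\lcm(\sigma)} = \lcm(b(\sigma))$, it preserves joins since the support of an lcm is the union of supports, it is surjective, and it carries atoms bijectively to atoms; the Gasharov--Peeva--Welker comparison theorem then bounds the total Betti numbers of the target $I(G)$ by those of the source $I(G,\w)$, so $\pdim S/I(G) \leq \pdim S/I(G,\w)$, and the paper's definition $\depth S/I = n - \pdim S/I$ converts this into $\depth(G,\w) \leq \depth G$. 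You also invoke the comparison in the correct direction (the coarser lattice is the target and has the smaller projective dimension --- the same mechanism by which exponent deformations can only increase Betti numbers). Two small repairs are worth making. First, for ``atoms map bijectively to atoms'' you should record that the monomials $x_ix_j^{\w(x_j)}$ (respectively $x_ix_j$) are pairwise incomparable under divisibility, because distinct edges have distinct two-element supports; this is what guarantees each set is the minimal generating set and hence the atom set of its lcm lattice. Second, the closing aside that ``an alternative route through polarization is available'' should be dropped or justified: polarizing $I(G,\w)$ yields a squarefree ideal in a larger ring, and there is no evident regular-sequence specialization taking that polarization to $I(G)$, so that route does not go through as stated. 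Neither point affects your main argument; its benefit over the paper's citation is that it exposes the combinatorial reason the inequality holds, at the cost of invoking the full strength of the lcm-lattice comparison theorem.
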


    \begin{proof}
        This follows from \cite[Observation 37]{casiday2021betti}.
    \end{proof}

We now give an application of \cref{lem:regularity}, which allows us to construct a weighted oriented graph with controlled homological invariants from a given base graph.

\begin{lemma}\label{lem:construct-new-graph}
    Let $G$ be a graph and $r\geq 0$ an integer. Then there exists a weight function $\w$ on the vertices of $G$ and an orientation on $G$ such that 
    \[
    \dim(G,\w) = \dim G, \ \depth(G,\w) = \depth G \text{ and } \reg(G,\w) = \reg G + r.
    \]
\end{lemma}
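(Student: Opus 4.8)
The plan is to reduce the statement directly to \cref{lem:regularity}. Since the edge ideal $I(G)$ is a squarefree monomial ideal (I assume $G$ has at least one edge, as otherwise $I(G)=0$, every invariant is degenerate, and only $r=0$ is meaningful), I would apply \cref{lem:regularity} to $I = I(G)$ with the given integer $r$. This yields a variable — after relabeling, say $x_i$ — such that the ideal $J$ obtained from $I(G)$ by substituting $x_i^{r+1}$ for $x_i$ satisfies $\pdim S/J = \pdim S/I(G)$ and $\reg S/J = \reg S/I(G) + r$. All of the analytic content is now packaged inside $J$, and what remains is to recognize $J$ as the edge ideal of a weighted oriented graph built on $G$.

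To do this, I would keep the underlying graph $G$, set $\w(x_i) = r+1$ and $\w(x_j) = 1$ for every $j \neq i$, orient every edge incident to $x_i$ towards $x_i$, and orient the remaining edges arbitrarily. Then I would verify $I(G,\w) = J$ by comparing generators edge-by-edge: an edge $\{x_i, x_b\}$ oriented towards $x_i$ contributes $x_b x_i^{\w(x_i)} = x_b x_i^{r+1}$, which is exactly the image of $x_i x_b$ under the substitution; an edge $\{x_a, x_b\}$ avoiding $x_i$ contributes $x_a x_b^{\w(x_b)} = x_a x_b$ (since $\w(x_b) = 1$), which the substitution leaves unchanged. Hence the generating sets of $I(G,\w)$ and $J$ coincide.

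With the identification $I(G,\w) = J$ in hand, the three claimed equalities follow at once. The dimension equality $\dim(G,\w) = \dim G$ is exactly \cref{lem:dim(G w)=dim(G)}. For depth, I would invoke the Auslander--Buchsbaum relation $\depth S/M = n - \pdim S/M$ used throughout the paper: since $\pdim S/J = \pdim S/I(G)$ and both ideals live in the same ring $S$ on the same $n$ variables, we get $\depth(G,\w) = n - \pdim S/J = n - \pdim S/I(G) = \depth G$. Finally, $\reg(G,\w) = \reg S/J = \reg S/I(G) + r = \reg G + r$ is immediate from \cref{lem:regularity}.

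I expect the only point needing care — rather than a genuine obstacle — to be the verification $I(G,\w) = J$, that is, that the purely algebraic operation ``raise one variable to the power $r+1$'' is realized by a bona fide weight-and-orientation scheme. The key structural observation making this work is that, in a weighted oriented graph, a vertex's weight enters a generator only through edges pointing into that vertex; orienting every edge at $x_i$ towards $x_i$ therefore forces each such generator to carry the full power $x_i^{r+1}$ while leaving every other generator squarefree.
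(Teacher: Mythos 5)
Your proposal is correct and takes essentially the same approach as the paper: apply \cref{lem:regularity} to $I(G)$, realize the resulting ideal $J$ as $I(G,\w)$ by giving the chosen vertex weight $r+1$ and orienting all incident edges toward it, then obtain depth via Auslander--Buchsbaum, regularity from the lemma, and dimension from \cref{lem:dim(G w)=dim(G)}. The only differences are presentational: you spell out the generator-by-generator verification of $I(G,\w)=J$ and flag the edgeless case, both of which the paper leaves implicit.
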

\begin{proof}
    By \cref{lem:regularity}, there exists a vertex $x$ of $G$ such that replacing $x$ with $x^{r+1}$ in the edge ideal $I(G)$ results in a monomial ideal $J$ such that $\depth S/J = \depth G$ and $\reg S/J = \reg G + r$. By definition, $J$ equals the ideal $I(G,\w)$ where
    \begin{enumerate}
        \item $G$ is considered an oriented graph with all edges involving $x$ oriented towards it, and other edges are oriented randomly; and
        \item $\w(x)=r+1$ and $\w(v)=1$ for any $v\in V(G)\setminus \{x\}$.
    \end{enumerate}
    The results on depth and regularity then follow. The result on dimension follows from \cref{lem:dim(G w)=dim(G)}. This concludes the proof.
\end{proof}

\subsection{Betti splittings}

A popular method to compute the homological invariants of a monomial ideal $I$, e.g., Betti numbers, projective dimension, or regularity, is to find a decomposition $I=J+K$ where $J,K,$ and $J+K$ are such that $\mingens(I)=\mingens(J) \bigsqcup \mingens(K)$ and 
\[
\beta_{i,a(m)}(S/I)= \beta_{i,a(m)}(S/J)+\beta_{i,a(m)}(S/K)+\beta_{i-1,a(m)}(S/J\cap K)
\]
for any $i\geq 1$ and monomial $m$. Such a decomposition is called a \emph{Betti splitting}. There are methods to create one for ideals of special forms. In the context of this paper, we will only use the following result. 

\begin{theorem}[\protect{{\cite[Corollary 2.7]{FHV09}}}] \label{thm:betti-splittings}
    Let $x$ be a variable. Let $I=J+K$ be monomial ideals in $S$ such that $\mingens(J)=\{ m \in \mingens(I)\colon x\mid m \}$ and $\mingens(K) = \mingens(I)\setminus \mingens(J)$. Assume that $J, K\neq 0$. Then if $K$ has a linear minimal resolution, then $I=J+K$ is a Betti splitting. In particular, we have
    \begin{align*}
        \pd S/I &= \max \{ \pd(S/J), \pd(S/K), \pd(S/J\cap K) +1 \}\\
        \intertext{and}
        \reg S/I &= \max \{ \reg S/J, \reg S/K, \reg S/J\cap K -1 \}.
    \end{align*}
\end{theorem}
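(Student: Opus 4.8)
The statement is attributed to \cite{FHV09}, but here is how I would prove it directly. The plan is to start from the Mayer--Vietoris short exact sequence
\[
0 \to S/(J\cap K) \xrightarrow{f} S/J \oplus S/K \to S/I \to 0,
\]
where $f(\overline{g}) = (\overline{g}, \overline{g})$ and the second map is $(\overline a, \overline b) \mapsto \overline a - \overline b$; exactness is standard and uses only $I = J + K$. Applying $-\otimes_S \Bbbk$ and reading off the induced long exact sequence of $\operatorname{Tor}$ modules in each fixed multidegree $a(m)$, one sees that the Betti splitting identity
\[
\beta_{i, a(m)}(S/I) = \beta_{i,a(m)}(S/J) + \beta_{i,a(m)}(S/K) + \beta_{i-1,a(m)}(S/(J\cap K))
\]
holds for all $i \geq 1$ and all $m$ \emph{if and only if} every comparison map $f_* \colon \operatorname{Tor}_i(S/(J\cap K), \Bbbk)_{a(m)} \to \operatorname{Tor}_i(S/J,\Bbbk)_{a(m)} \oplus \operatorname{Tor}_i(S/K,\Bbbk)_{a(m)}$ vanishes, since the long exact sequence then breaks into exactly the short exact sequences recording this additivity. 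So the whole problem reduces to proving $f_* = 0$ in every homological degree and multidegree.

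To control the multidegrees I would use the standard fact (visible already from the Taylor resolution) that $\beta_{i,a(m)}(S/L) \neq 0$ forces $m$ to be an $\lcm$ of a subset of $\mingens(L)$. The component of $f_*$ landing in $\operatorname{Tor}(S/K)$ is then zero for free: since $\mingens(K)$ consists of monomials coprime to $x$, every multidegree supporting a nonzero Betti number of $S/K$ has $x$-exponent $0$; on the other hand $\mingens(J\cap K) \subseteq J$ consists of monomials divisible by $x$, so every multidegree supporting a nonzero Betti number of $S/(J\cap K)$ has $x$-exponent at least $1$. These two multidegree sets are disjoint, so the $\operatorname{Tor}(S/K)$-component of $f_*$ is identically zero, using no hypothesis on $K$.

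The remaining, and genuinely harder, task is to show that the component $\operatorname{Tor}_i(S/(J\cap K))_{a(m)} \to \operatorname{Tor}_i(S/J)_{a(m)}$ vanishes; this is where the linear resolution of $K$ must enter, since both $J\cap K$ and $J$ have all generators divisible by $x$ and so the cheap $x$-degree separation is unavailable. My plan is a degree and homological-degree bookkeeping argument: writing $d$ for the common degree of the generators of $K$, the hypothesis that $K$ has a linear resolution pins its Betti numbers to the single diagonal $\deg m = i + d$, and, because each minimal generator of $J\cap K$ is $\lcm(g,h)$ for some $g \in \mingens(J)$, $h \in \mingens(K)$ with $g \nmid h$ and $h \nmid g$ (both being distinct minimal generators of $I$), these generators have degree at least $d+1$ and always carry the ``$K$-part'' $h$. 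Propagating this through the $\lcm$-lattice support of the Betti numbers, I would show that a multidegree $m$ with $\beta_{i,a(m)}(S/(J\cap K)) \neq 0$ cannot simultaneously satisfy $\beta_{i,a(m)}(S/J) \neq 0$, forcing the $J$-component of $f_*$ to vanish as well. I expect this degree comparison---reconciling the homological index on both sides against the total degrees forced by linearity of $K$---to be the main obstacle, and the precise point where the linear-resolution hypothesis is indispensable.

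Finally, once the Betti splitting is established, the two displayed formulas are formal consequences requiring no further hypothesis. Because the identity expresses $\beta_{i,a(m)}(S/I)$ as a sum of three nonnegative terms with the $J\cap K$ contribution shifted up by one homological degree, taking the maximum over all $(i,m)$ of the homological index yields $\pd S/I = \max\{\pd S/J, \pd S/K, \pd S/(J\cap K) + 1\}$, while taking the maximum of $\deg m - i$ yields $\reg S/I = \max\{\reg S/J, \reg S/K, \reg S/(J\cap K) - 1\}$, where both the $+1$ and the $-1$ arise from the single homological shift in the last term.
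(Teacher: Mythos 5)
Your overall strategy---the Mayer--Vietoris sequence, the reduction of the Betti-splitting property to the vanishing of the comparison maps $\operatorname{Tor}_i(S/(J\cap K),\Bbbk)\to \operatorname{Tor}_i(S/J,\Bbbk)\oplus\operatorname{Tor}_i(S/K,\Bbbk)$ in each multidegree, and the $x$-degree argument that kills the $\operatorname{Tor}(S/K)$-component---is exactly the strategy of \cite{FHV09}, which the paper simply cites without proof. The gap is the step you yourself flag as the main obstacle: the vanishing of the $\operatorname{Tor}(S/J)$-component. That step is not just unfinished; it cannot be completed, because the theorem as printed is false: the linear-resolution hypothesis sits on the wrong ideal. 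Linearity of $K$ gives no control over $\operatorname{Tor}_i(S/(J\cap K),\Bbbk)\to\operatorname{Tor}_i(S/J,\Bbbk)$, and the multidegree disjointness you hope to prove fails. Concretely, in $S=\Bbbk[x,a,b,c,d]$ take
\[
J=(xab,\ xcd),\qquad K=(ac),\qquad I=J+K.
\]
Here $\mingens(J)$ is exactly the set of generators of $I$ divisible by $x$, and $K$ is principal, hence has a linear minimal resolution. But $J\cap K=(xabc,\ xacd)$, and both $\beta_{2,\,xabcd}(S/(J\cap K))$ and $\beta_{2,\,xabcd}(S/J)$ are nonzero: the syzygy $d\,e'_1-b\,e'_2$ on the generators $xabc,xacd$ of $J\cap K$ maps to $cd\,e_1-ab\,e_2$, the minimal syzygy on the generators $xab,xcd$ of $J$, with unit coefficient, so the comparison map is even an isomorphism in that multidegree. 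Correspondingly the conclusions fail: the Taylor syzygy between $xab$ and $xcd$ in $I$ is $d\sigma_1-b\sigma_2$, where $\sigma_1=c\,e_{xab}-xb\,e_{ac}$ and $\sigma_2=a\,e_{xcd}-xd\,e_{ac}$, so $\beta_\bullet(S/I)=(1,3,2)$, giving $\pd S/I=2$ and $\reg S/I=2$, whereas the displayed formulas predict $\pd S/I=\max\{2,1,2+1\}=3$ and $\reg S/I=\max\{3,1,3-1\}=3$.

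What is true---and what \cite[Corollary 2.7]{FHV09} actually asserts---is the statement with the roles of $J$ and $K$ reversed: the linear resolution must be assumed for $J$, the part whose generators \emph{are} divisible by $x$. With that hypothesis your degree bookkeeping does close: every minimal generator of $J\cap K$ is $\lcm(g,h)$ with $g\in\mingens(J)$, $h\in\mingens(K)$, $h\nmid g$, hence has degree at least $d+1$ where $d$ is the common degree of the generators of $J$; linearity of $J$ concentrates $\beta_{i,j}(S/J)$ on the diagonal $j=i+d-1$ (for $i\geq 1$), while $\beta_{i,j}(S/(J\cap K))\neq 0$ forces $j\geq i+d$, so the two supports are disjoint in every homological degree and the $J$-component vanishes. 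In your version, the ``$K$-part $h$'' carried by the generators of $J\cap K$ only produces degrees exceeding the generating degree of $K$, which re-proves the vanishing of the component you had already killed by $x$-degrees, and says nothing about the $J$-component. Note finally that the paper applies the theorem in the corrected form: in the proof of \cref{prop:reg-complete-graphs-with-leaves}, the ideal with the linear resolution, $y_{t+1}(x_1,x_2,x_3,y_1,\dots,y_t)$, is precisely the part divisible by the chosen variable $y_{t+1}$. So the swap is a harmless misstatement for the paper's purposes, but it is fatal for a proof attempt that takes the printed hypothesis at face value.
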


\section{Weighted oriented graphs with depth zero}\label{sec:wog-depth-0}

In this section we study weighted oriented graphs $(G,\w)$ with depth $0$. By definition, it means that they have maximum projective dimension. Alesandroni \cite{Ale20} has fully characterized monomial ideals with maximum projective dimension in terms of dominant monomials. We first recall some terminology from \cite{Ale20}. Let $M$ be a set of monomials and $m\in M$. Then $m$ is called \emph{dominant} in $M$ if there exists a variable $x$ and an integer $k$ such that $m$ is the only monomial in $M$ that is divisible by $x^k$. Equivalently, $m$ is dominant in $M$ if $\lcm(M)\neq \lcm(M\setminus \{m\})$. Moreover, we call the set $M$ \emph{dominant} if each monomial in $M$ is dominant in $M$. 

For two monomials $m$ and $m'$, we say $m$ \emph{strongly divides} $m'$ if $m$ divides $m'$ and whenever $x$ is a variable that divides $m$, it divides $m'/m$. We recall the main theorem in \cite{Ale20}.

\begin{theorem}[\protect{{\cite[Corollary 5.3]{Ale20}}}]\label{thm:dominant-Ales}
    Let $I$ be a monomial ideal in the polynomial ring $S=\Bbbk[x_1,\dots, x_n]$, where $\Bbbk$ is a field. Then $\depth S/I=0$ if and only if $\mingens(I)$ contains a dominant set $M$ of cardinality $n$, and no monomial in $\mingens(I)$ strongly divides $\lcm(M)$.
\end{theorem}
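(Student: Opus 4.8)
The plan is to reduce the statement to the classical characterization of depth zero via associated primes and then translate it into the combinatorial language of dominant sets. Recall that $\depth S/I = 0$ if and only if the irrelevant ideal $\mf m = (x_1,\dots,x_n)$ lies in $\operatorname{Ass}(S/I)$; since $I$ is a monomial ideal, this happens precisely when there is a \emph{monomial} $w$ with $(I : w) = \mf m$, that is, a monomial $w \notin I$ such that $x_i w \in I$ for every $i$. So I would first establish this dictionary and then prove the two implications by passing back and forth between such a witness $w$ and a dominant set $M$.

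For the forward direction, assume such a $w$ exists. For each $i$, since $x_i w \in I$ there is a minimal generator $m_i \in \mingens(I)$ dividing $x_i w$; as $w \notin I$ we have $m_i \nmid w$, which forces $\deg_{x_i}(m_i) = \deg_{x_i}(w) + 1$ and $\deg_{x_j}(m_i) \le \deg_{x_j}(w)$ for all $j \neq i$. Setting $e_i := \deg_{x_i}(w) + 1$, I claim $M := \{m_1,\dots,m_n\}$ is the desired set: the monomials are pairwise distinct and $m_i$ is the unique element of $M$ divisible by $x_i^{e_i}$, so $M$ is dominant of cardinality $n$ with $\lcm(M) = \prod_i x_i^{e_i}$ and $w = \lcm(M)/(x_1\cdots x_n)$. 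Finally, no $g \in \mingens(I)$ strongly divides $\lcm(M)$: strong divisibility would give $\deg_{x_i}(g) \le e_i - 1 = \deg_{x_i}(w)$ for every $i$, hence $g \mid w$ and $w \in I$, a contradiction.

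For the converse, suppose $M \subseteq \mingens(I)$ is dominant with $\lvert M\rvert = n$ and no generator strongly dividing $\lcm(M)$. A short counting argument shows that in $n$ variables a dominant set has size at most $n$, and that in the extremal case each variable has a unique ``owner''; thus I can relabel so that $m_i$ is the unique element of $M$ of maximal $x_i$-degree $e_i$, whence $\lcm(M) = \prod_i x_i^{e_i}$. Put $w := \prod_i x_i^{e_i - 1}$. Because $m_i$ divides $x_i w$ we get $x_i w \in I$ for all $i$, while $w \notin I$: any generator dividing $w$ would have degree at most $e_i - 1$ in every variable $x_i$ and so would strongly divide $\lcm(M)$, against the hypothesis. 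Hence $(I : w) = \mf m$ and $\depth S/I = 0$.

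The exponent bookkeeping is routine; the two points that need care are the counting argument that a dominant set of size $n$ assigns a distinct owner variable to each monomial (so that $\lcm(M)$ has the clean product form above), and the justification that for a monomial ideal the witness for $\mf m \in \operatorname{Ass}(S/I)$ may be taken to be a monomial. I expect the main obstacle to be keeping the equivalence ``$g \mid w$'' $\Leftrightarrow$ ``$g$ strongly divides $\lcm(M)$'' exactly right, in particular handling the variables that do not divide $g$, since this equivalence is the hinge that makes both implications close.
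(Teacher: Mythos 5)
Your proof is correct, but there is nothing in the paper to compare it against: the paper does not prove this statement, it quotes it as a black box from Alesandroni \cite[Corollary 5.3]{Ale20}. What you have written is therefore an independent, self-contained proof, and it takes a more elementary route than the source: Alesandroni's characterization is obtained inside his structure theory of dominant sets and Taylor complexes (the same machinery behind \cite[Theorem 4.4]{Ale20}, also quoted in this paper, that $I$ has a minimal Taylor resolution if and only if $\mingens(I)$ is dominant), whereas you reduce everything to the standard fact that $\depth S/I=0$ if and only if there is a monomial $w\notin I$ with $x_iw\in I$ for all $i$, and then translate between such a socle witness and a dominant set via the dictionary $\lcm(M)=w\cdot x_1\cdots x_n$. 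Both implications check out, and the two points you flag are indeed the only delicate ones, and both close: (i) since $S/I$ is $\ZZ^n$-graded, its socle is spanned by classes of monomials, so if it is nonzero it contains the class of a monomial $w$ --- this justifies taking the witness monomial; (ii) each element of a dominant set strictly maximizes the degree of some variable, so the assignment of elements to such variables is injective, hence bijective when $\lvert M\rvert=n$; this yields the relabeling with $e_i=\deg_{x_i}(m_i)\geq 1$ for every $i$, and the inequality $e_i\geq 1$ is exactly what makes $w=\prod_i x_i^{e_i-1}$ well defined and makes the equivalence ``$g\mid w$ iff $g$ strongly divides $\lcm(M)$'' hold at the variables not dividing $g$. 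One small simplification: in the converse you only need $\mf m\subseteq (I:w)$ together with $w\notin I$ to conclude depth zero (every element of $\mf m$ kills the nonzero class of $w$); the full equality $(I:w)=\mf m$ follows by maximality but is not required.
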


The goal of this section is to translate this result to the context of edge ideals of weighted oriented graphs. We first go through some examples to illustrate the~concepts.

\begin{example}
    In the set of monomials $M \coloneqq \{xy^3,y^3z,x^4z\}$, the monomial $x^4z$ is dominant since $\lcm(M) = x^4y^3z \neq xy^3z=\lcm(M\setminus \{x^4z\})$, while the other two are not dominant following similar arguments.
\end{example}

We can fully characterize the dominant generators in $\mingens(I(G,\w))$ for a weighted oriented graph $(G,\w)$. Recall that a vertex of a graph is called a \emph{leaf} if it is connected to exactly one other vertex.

\begin{proposition}\label{prop:dominant-generator}
    Let $(G,\w)$ be a weighted oriented graph. A generator $uv^{\w(v)}$ of $I(G,\w)$ is dominant in $\mingens(I(G,\w))$ if and only if either of the following holds:
    \begin{enumerate}
        \item Either $u$ or $v$ is a leaf of $G$.
        \item $uv$ is the only edge of $(G,\w)$ that is oriented towards $v$, and $\w(v)>1$.
    \end{enumerate}
\end{proposition}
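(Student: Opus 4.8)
The plan is to unwind the definition of dominance directly via the $\lcm$ criterion, working entirely with the generating set $M \coloneqq \mingens(I(G,\w))$. Recall that a generator $g = uv^{\w(v)}$ is dominant in $M$ precisely when $\lcm(M) \neq \lcm(M \setminus \{g\})$, which happens exactly when there is a variable $x$ and an exponent $k$ such that $x^k$ divides $g$ but divides no other generator. Since $g = uv^{\w(v)}$ involves only the two variables $u$ and $v$, I would examine the contribution of $g$ to $\lcm(M)$ coordinate-by-coordinate in these two variables: $g$ is dominant if and only if either its $u$-exponent (which is $1$) or its $v$-exponent (which is $\w(v)$) strictly exceeds the corresponding exponent in $\lcm(M \setminus \{g\})$.

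The proof then splits into analyzing these two coordinates. \textbf{The $v$-coordinate.} The exponent of $v$ in $g$ is $\w(v)$. A generator other than $g$ can contribute a power of $v$ only by being of the form $u'v^{\w(v)}$ (another edge oriented toward $v$) or of the form $vw^{\w(w)}$ (an edge oriented away from $v$, contributing $v^1$). So $g$ dominates in the $v$-coordinate exactly when no other generator matches or exceeds $v^{\w(v)}$: this fails to be automatic only when some edge is oriented out of $v$ giving a $v^1$ factor, or when $\w(v) = 1$ and such an edge exists. Thus dominance in the $v$-coordinate holds iff $uv$ is the only edge oriented toward $v$ and $\w(v) > 1$ (matching condition (2)), OR $v$ has no other incident edges at all (i.e. $v$ is a leaf). \textbf{The $u$-coordinate.} The exponent of $u$ in $g$ is $1$, so $g$ can dominate here only if no other generator is divisible by $u$ at all; since every incident edge of $u$ contributes at least a $u^1$ factor (whether oriented into or out of $u$), this forces $uv$ to be the unique edge incident to $u$, i.e. $u$ is a leaf. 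I would assemble these two cases to recover exactly the disjunction in the statement: $g$ is dominant iff $u$ is a leaf, or $v$ is a leaf, or ($uv$ is the only edge oriented toward $v$ and $\w(v)>1$).

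For the forward direction I would argue that if $g$ is dominant, some variable witnesses this, and that variable must be $u$ or $v$; the $u$-witness forces $u$ to be a leaf, while the $v$-witness forces either $v$ to be a leaf or condition (2). For the converse I would check that each listed condition produces the required witnessing power: if $u$ is a leaf then $u^1$ divides only $g$; symmetrically if $v$ is a leaf then $v^{\w(v)}$ (indeed any positive power of $v$) divides only $g$; and under condition (2) the power $v^{\w(v)}$ with $\w(v)>1$ divides $g$ but no other generator, since every other $v$-divisible generator contributes at most $v^1 < v^{\w(v)}$.

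The one genuine subtlety — the step I expect to need the most care — is the bookkeeping of how the two variables $u$ and $v$ can simultaneously receive contributions from neighboring edges, particularly the asymmetry between $u$ and $v$ created by the orientation: the factor $v^{\w(v)}$ can be as large as the weight, whereas $u$ contributes only $u^1$, so the leaf condition is forced on the $u$-side but the weaker condition (2) suffices on the $v$-side. I would need to verify that these cases are genuinely exhaustive and that condition (2) correctly excludes the degenerate situation $\w(v) = 1$, in which case a lone incoming edge gives only $v^1$ and is matched by any outgoing edge's $v^1$ or rendered non-dominant unless $v$ is in fact a leaf. Handling the boundary between conditions (1) and (2) cleanly, so that the characterization is exactly the stated ``if and only if,'' is where I would concentrate the argument.
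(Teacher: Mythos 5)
Your proposal is correct and follows essentially the same route as the paper's proof: both reduce dominance of $uv^{\w(v)}$ to the observation that a witnessing variable must be $u$ or $v$, and then count the $u$- and $v$-exponents contributed by neighboring edges (incoming edges at $v$ giving $v^{\w(v)}$, outgoing edges giving $v^1$, any edge at $u$ giving $u^1$). The only difference is organizational -- you phrase it as a coordinate-wise ``if and only if'' while the paper proves the backward direction by exhibiting the witness powers ($u$, $v$, or $v^2$) and the forward direction by contrapositive with a case split on $\w(v)$ -- and your case analysis, including the boundary case $\w(v)=1$, is complete and matches the paper's.
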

\begin{proof}
    Assume either of the two conditions holds. If $u$ (resp, $v$) is a leaf, then the only monomial in $\mingens(I(G,\w))$ that is divisible by $u$ (resp, $v$) is $uv^{\w(v)}$. Hence $uv^{\w(v)}$ is dominant in $\mingens(I(G,\w))$, as desired. Now assume that $uv$ is the only edge of $(G,\w)$ that is oriented towards $v$, and $\w(v)>1$. By definition, $uv^{\w(v)}$ is the only monomial in $\mingens(I(G,\w))$ divisible by $v^2$, and thus is dominant in $\mingens(I(G,\w))$, as desired.

    Now assume that both conditions fail. Condition $(1)$ failing means that neither $u$ nor $v$ is a leaf of $G$. In other words, there exist vertices $a,b$ of $G$, distinct from $u$ and $v$, such that $au$ and $bv$ are edges of $G$. If $\w(v)= 1$, then $uv^{\w(v)}=uv$ cannot be dominant in $\mingens(I(G,\w))$ due to the existence of the edges $au$ and $bv$ in $G$. Thus we assume that $\w(v)\geq 2$. Since condition $(2)$ fails, there exists another edge of $(G,\w)$ that is oriented towards $v$, i.e., $\mingens(I(G,\w))$ contains $cv^{\w(v)}$ for some vertex $c$. Coupling this with the existence of the edge $au$ in $G$, we conclude that $uv^{\w(v)}$ is not dominant in $\mingens(I(G,\w))$.
\end{proof}

Next we characterize the weighted oriented graphs $(G,\w)$ where $\mingens(G,\w)$ is a dominant set of cardinality $|V(G)|$.

\begin{proposition}\label{prop:dominant-unicyclic}
    Let $(G,\w)$ be a weighted oriented graph on $n$ vertices. Then $\mingens(G,\w)$ is a dominant set of cardinality $n$ if and only if $G$ is a weighted naturally oriented maximal pseudo-forest where any vertex $v$ of $G$ is either a leaf or satisfies  $\w(v)\geq 2$.
\end{proposition}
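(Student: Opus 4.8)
The plan is to reduce everything to the edge-by-edge criterion of \cref{prop:dominant-generator} together with a degree count. First I would record two elementary reductions. Since distinct edges yield distinct minimal generators and no generator divides another (a generator $uv^{\w(v)}$ involves only the two variables $u$ and $v$, so a divisibility would force the two edges to coincide), the cardinality of $\mingens(G,\w)$ equals $|E(G)|$; thus the cardinality condition reads $|E(G)| = n = |V(G)|$. Second, because the connected components of $G$ are vertex-disjoint, a generator coming from an edge of one component shares no variable with generators of the other components, so $\mingens(G,\w)$ is dominant if and only if $\mingens(G_i,\w)$ is dominant for every component $G_i$. In particular I may reason about dominance one component at a time.

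For the backward implication, suppose $G$ is a naturally oriented maximal pseudo-forest with the stated weight condition. Each component is unicyclic, so $|E(G)| = |V(G)| = n$ and the cardinality is correct. I would first observe that a natural orientation makes every vertex have in-degree exactly $1$: a cycle vertex receives its single incoming cycle edge and sends all incident tree edges outward, while a tree vertex receives only the edge from its parent. Then for each edge $u\to v$ I apply \cref{prop:dominant-generator}: if $u$ or $v$ is a leaf, condition (1) gives dominance; otherwise neither endpoint is a leaf, so in particular $\w(v) \geq 2$ by hypothesis, and since $v$ has in-degree $1$ the edge $uv$ is the unique edge oriented into $v$, so condition (2) applies.

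For the forward implication I would argue through a counting inequality applied to each component. Fix a component $H$ with $\mingens(H,\w)$ dominant, and split its vertices by in-degree into $A$ (in-degree $0$), $B$ (in-degree $1$), and $C$ (in-degree $\geq 2$). The point is that if $d^-(v) \geq 2$ then condition (2) of \cref{prop:dominant-generator} fails for every edge into $v$ (as $v$ is not a leaf), so every in-neighbor of such a $v$ must itself be a leaf by condition (1); such a leaf has its unique edge pointing into $v$, hence lies in $A$, and these leaves are distinct across different $v \in C$. This yields $\sum_{v\in C} d^-(v) \leq |A|$, whence $|E(H)| = |B| + \sum_{v\in C} d^-(v) \leq |A| + |B| \leq |V(H)|$, with equality forcing $A = C = \varnothing$, i.e.\ every vertex of $H$ has in-degree exactly $1$. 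Summing over components and using $|E(G)| = |V(G)|$ forces equality in each, so each component is connected with $|E| = |V|$, hence unicyclic, and every vertex has in-degree $1$.

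It remains to upgrade ``in-degree $1$ everywhere'' on a unicyclic graph to the natural orientation and the weight condition; this is the step I expect to be the main obstacle. I would encode the orientation by the map $f$ sending each vertex $v$ to its unique in-neighbor $f(v)$; iterating $f$ traces a backward directed walk that, being finite and unable to backtrack (an edge carries a single orientation), closes up into a directed cycle, which must be the unique cycle of the component and is therefore oriented cyclically. For a tree vertex the same no-short-cycle argument shows that the $f$-walk is the simple path toward the cycle, so its in-edge comes from its parent and all tree edges point away from the cycle: this is exactly the natural orientation. Finally, since in-degree $1$ leaves every leaf with out-degree $0$, no in-neighbor is ever a leaf; hence for a non-leaf $v$ its in-edge $u\to v$ has both endpoints non-leaves, and dominance via \cref{prop:dominant-generator} can only come from condition (2), giving $\w(v) > 1$. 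This establishes the weight condition and completes the characterization.
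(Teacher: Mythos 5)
Your proof is correct, and while it rests on the same key lemma as the paper (\cref{prop:dominant-generator}) and makes the same reduction to connected components, your forward implication takes a genuinely different route. The paper first gets unicyclicity from the bare edge count ($n$ edges, connected), and then proves the natural orientation and the weight condition by local propagation: starting from one cycle edge oriented towards $x_2$, condition (2) of \cref{prop:dominant-generator} forces $\w(x_2)\geq 2$ and pushes the orientation onto the next cycle edge, and the same step is repeated around the cycle and down each rooted tree attached to it. You instead invoke dominance globally, in a double-counting argument over in-degrees: any vertex of in-degree at least $2$ can only receive edges from leaves of in-degree $0$, and these leaves are distinct across such vertices, which together with $|E(G)|=|V(G)|$ forces every vertex to have in-degree exactly $1$ (and unicyclicity of each component along the way). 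The natural orientation then follows by pure graph theory from your functional-graph argument --- backward walks cannot backtrack, so they can only close up on the unique cycle, and in the tree part a non-backtracking walk is the ancestor path --- and one last application of \cref{prop:dominant-generator} gives the weights. What your route buys is a clean separation of the combinatorial input (dominance) from the graph-theoretic conclusion, with ``in-degree $1$ everywhere'' isolated as the crucial intermediate invariant and no step-by-step WLOG bookkeeping; what it costs is the extra counting inequality and the standard but not-free fact that non-backtracking walks in trees are simple paths, which the paper's explicit edge-by-edge propagation never needs. Your explicit justification that $|\mingens(I(G,\w))|=|E(G)|$ (distinct edges give generators with distinct two-variable supports) is a detail the paper uses implicitly and is worth keeping.
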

\begin{proof}
    If $H_1,\dots, H_k$ are the connected components of $G$, then $\mingens(G,\w)$ is the disjoint union of $\{\mingens(H_i,\w)\}_{i=1}^k$, and these sets are in different variables. It is then straightforward that a monomial in $\mingens(I(G,\w))$ is dominant in $\mingens(I(G,\w))$  if and only if it is dominant in $\mingens(I(H_i,\w))$ where $H_i$ contains the corresponding edge. Therefore we can assume that $G$ is connected. The backwards direction follows immediately from \cref{prop:dominant-generator}.

    We now prove the forward implication. Assume that $\mingens(G,\w)$ is a dominant set of cardinality $n$. In particular, this means that $G$ has $n$ edges, and since $G$ is connected, $G$ is a unicyclic graph. Without loss of generality, assume that $\{x_1x_2,\dots, x_{k-1}x_k, x_kx_1\}$ is the cycle in $G$, and that $x_1x_2$ is oriented towards $x_2$. Since $x_2$ is not a leaf of $G$, by \cref{prop:dominant-generator}, $\w(x_2)\geq 2$, and $x_2x_3$ is oriented towards $x_3$. Repeating this argument, the cycle in $G$ is oriented in one direction, and $\w(x_i)\geq 2$ for any $i \in \{1, \dots, k\}$. 
    
    If $G$ is exactly this cyclic subgraph, then we obtain what we desired. So we assume that there are trees attached to the cycle. Let $T$ be the induced tree of $G$ that is attached to $x_1$. We consider $T$ to be a tree rooted at $x_1$, i.e., we can assume the vertices of this tree are 
    \[
    \{x_{ij}\colon 0\leq i\leq t, 1\leq j \leq k_i \}
    \]
    where $t, k_1,\dots, k_t$ are integers, and $dist_T(x_1,x_{ij}) = i$. By definition, we must have $k_0=1$ and $x_{01}=x_1$. Since $x_kx_1$ is already oriented towards $x_1$ in $G$, by \cref{prop:dominant-generator}, for any $1\leq j\leq k_1$, the edges $x_1x_{1j}$ must be oriented towards $x_{1j}$ in $G$, and either $x_{1j}$ is a leaf or $\w(x_{1j})\geq 2$. Repeating this argument, the tree $T$ is oriented away from $x_1$, and all of its vertices are either leaves, or are of weight at least 2. The same arguments apply to the other trees attached to the cycle in $G$. In conclusion, $(G,\w)$ is a weighted naturally oriented maximal pseudo-forest whose vertices are either leaves or are of weight at least 2, as desired. 
\end{proof}

Now we are ready to prove the main result of this section, which allows us to easily identify when a weighted oriented graph has depth $0$. Spoiler alert: the other condition in \cref{thm:dominant-Ales} is automatically satisfied.

\begin{theorem} \label{thm:depth-zero-wo-graphs}
    A weighted oriented graph has depth $0$ if and only if it contains, as a subgraph with the same number of vertices, a weighted naturally oriented maximal pseudo-forest whose vertices are either leaves or are of weight at least 2.
\end{theorem}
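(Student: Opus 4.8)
The plan is to combine \cref{thm:dominant-Ales} with \cref{prop:dominant-unicyclic}, and the main work is verifying that the second condition in Alesandroni's criterion is automatic. First I would establish the backward direction, which is the easy half. Suppose $(G,\w)$ contains a weighted naturally oriented maximal pseudo-forest $(H,\w)$ on the same $n$ vertices whose vertices are all leaves or of weight at least $2$. Since $E(H)\subseteq E(G)$ and $H$ uses all $n$ vertices, by \cref{prop:dominant-unicyclic} the set $\mingens(I(H,\w))$ is a dominant set of cardinality $n$ inside $S=\Bbbk[x_1,\dots,x_n]$. The subtle point is that $\mingens(I(H,\w))\subseteq \mingens(I(G,\w))$, so this same set $M\coloneqq\mingens(I(H,\w))$ sits inside $\mingens(I(G,\w))$ and remains dominant \emph{there} (being dominant is about the existence of a witnessing variable power, which is inherited when we pass to the larger ambient set only if nothing else in $\mingens(I(G,\w))$ destroys it --- I must check this). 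By \cref{thm:dominant-Ales}, to conclude $\depth(G,\w)=0$ it then remains only to verify that no generator of $I(G,\w)$ strongly divides $\lcm(M)$.

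This last verification is what I expect to be the main obstacle, and it is the content of the ``spoiler alert'' remark. I would argue as follows. Write $\lcm(M)=\prod_{v} v^{e_v}$ where $e_v=\w(v)$ if some edge of $H$ is oriented toward $v$ (i.e. $v$ is a non-source of $H$), $e_v=1$ if $v$ only appears as a tail, and one should check via the pseudo-forest structure that every vertex of $H$ has in-degree or appears with the appropriate exponent. Now take any generator $uv^{\w(v)}\in\mingens(I(G,\w))$ and suppose for contradiction that it strongly divides $\lcm(M)$. Strong divisibility forces $u^2\mid \lcm(M)$ and $v^{\w(v)+1}\mid \lcm(M)$; but by construction of $\lcm(M)$ the exponent of $v$ in $\lcm(M)$ is exactly $\w(v)$ (the maximum weight with which $v$ can appear as a head across $M$), giving $v^{\w(v)+1}\nmid \lcm(M)$, a contradiction. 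I would phrase this cleanly: for \emph{any} edge ideal of a weighted oriented graph, the exponent of a variable $v$ in $\lcm(\mingens)$ never exceeds $\max(1,\w(v))$, whereas strong divisibility by $uv^{\w(v)}$ would demand the exponent of $v$ to be at least $\w(v)+1$. Hence no such generator exists, and the strong-divisibility condition is vacuously satisfied.

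For the forward direction, I would start from $\depth(G,\w)=0$ and apply \cref{thm:dominant-Ales} to extract a dominant set $M\subseteq\mingens(I(G,\w))$ of cardinality $n$. Each element of $M$ has the form $u_iv_i^{\w(v_i)}$ and corresponds to an oriented edge of $G$; let $H$ be the spanning subgraph of $G$ with exactly these $n$ edges, inheriting orientation and weights. The dominance of $M$ in $\mingens(I(G,\w))$ implies $M$ is dominant as a set in its own right (dominance of a monomial in a subset follows from dominance in the superset, since the witnessing variable power still singles it out within $M$). Since $M$ involves exactly the $n$ vertices of $G$ --- one must confirm $H$ is spanning, which follows because $|M|=n$ edges on $n$ vertices with $M$ dominant forces each vertex to be touched --- the set $\mingens(I(H,\w))=M$ is a dominant set of cardinality $n$, so \cref{prop:dominant-unicyclic} applies to $H$ and yields that $(H,\w)$ is a weighted naturally oriented maximal pseudo-forest whose vertices are leaves or of weight at least $2$. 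This exhibits the required subgraph and completes the proof.
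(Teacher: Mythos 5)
Your proposal is correct and follows essentially the same route as the paper: it combines \cref{thm:dominant-Ales} with \cref{prop:dominant-unicyclic} and then checks that the strong-divisibility condition is automatic, via the same observation the paper uses (the exponent of $v$ in the lcm of any collection of generators is at most $\w(v)$, while strong divisibility by $uv^{\w(v)}$ would force exponent at least $\w(v)+1$). The two points you flagged as ``must check'' are moot: by the paper's definition a \emph{dominant set} $M$ is dominant intrinsically (each element is dominant within $M$ itself), so \cref{thm:dominant-Ales} never requires comparing $M$ against the ambient set $\mingens(I(G,\w))$ in either direction of the argument.
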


\begin{proof}
    Using \cref{thm:dominant-Ales} and \cref{prop:dominant-unicyclic}, it suffices to show that if $(G,\w)$ contains, as a subgraph with $|V(G)|$ vertices,  a weighted naturally oriented maximal pseudo-forest $(H,\w)$, then no monomial in $\mingens(I(G,\w))$ strongly divides $\lcm(\mingens(I(H,\w)))$. Indeed, with the above hypothesis, for every vertex of $G$, there exists an edge of $G$ that is oriented towards it. Thus we have 
    \[
    \lcm(\mingens(I(H,\w))) = \prod_{x \in V(G)} x^{\w(x)}.
    \]
    Since any monomial in  $\mingens(I(G,\w))$ is of the form $uv^{\w(v)}$, it does not strongly divides $\lcm(\mingens(I(H,\w)))$ by definition, as desired.
\end{proof}

It is worth noting that a monomial ideal $I$ has a minimal Taylor resolution if and only if $\mingens(I)$ is a dominant set \cite[Theorem 4.4]{Ale20}. Thus, the following result follows directly from \cref{prop:dominant-unicyclic} and \cref{prop:reg-Taylor}.

\begin{corollary}\label{cor:regularity-pseudo-forest}
    Let $(G,\w)$ be a weighted naturally oriented maximal pseudo-forest where each vertex is either a leaf or has weight at least 2. Then 
    \[
    \pdim(G,\w) = |E(G)|
    \]
    and
    \[
    \reg(G,\w)= \sum_{v\in V(G)} \w(v) - |E(G)|.
    \]\qed
\end{corollary}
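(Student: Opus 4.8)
The plan is to derive \cref{cor:regularity-pseudo-forest} as an immediate consequence of the machinery already assembled in this section, so the proof should be short. The key observation, stated just before the corollary, is that by \cite[Theorem 4.4]{Ale20} a monomial ideal $I$ has a \emph{minimal} Taylor resolution precisely when $\mingens(I)$ is a dominant set. First I would invoke \cref{prop:dominant-unicyclic}: since $(G,\w)$ is assumed to be a weighted naturally oriented maximal pseudo-forest in which every vertex is either a leaf or has weight at least $2$, that proposition tells us $\mingens(I(G,\w))$ is a dominant set (of cardinality $n = |V(G)|$, though only dominance is needed here). Hence the Taylor resolution of $S/I(G,\w)$ is minimal.

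With minimality of the Taylor resolution in hand, both formulas follow from \cref{prop:reg-Taylor}. That proposition gives $\pdim S/I(G,\w) = q$, where $q = |\mingens(I(G,\w))|$. Because each edge of $G$, under the given orientation, contributes exactly one generator $uv^{\w(v)}$ to $I(G,\w)$, and distinct edges give distinct generators, we have $q = |E(G)|$. This yields the first formula, $\pdim(G,\w) = |E(G)|$.

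For the regularity, \cref{prop:reg-Taylor} gives $\reg S/I(G,\w) = \deg\bigl(\lcm(\mingens(I(G,\w)))\bigr) - q$. The remaining point is to identify this least common multiple. Exactly as in the proof of \cref{thm:depth-zero-wo-graphs}, the natural orientation guarantees that every vertex $v$ of $G$ has some edge oriented towards it, so that the factor $v^{\w(v)}$ appears in at least one generator; no generator can contribute a higher power of any variable than $\w(v)$ in the $v$-slot. Therefore
\[
\lcm(\mingens(I(G,\w))) = \prod_{v \in V(G)} v^{\w(v)},
\]
whose degree is $\sum_{v\in V(G)} \w(v)$. Substituting $q = |E(G)|$ then gives $\reg(G,\w) = \sum_{v\in V(G)} \w(v) - |E(G)|$, as claimed.

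I do not anticipate a serious obstacle, since every ingredient is already proved; the proof is essentially a bookkeeping exercise. The one point requiring a little care is the identification of $\lcm(\mingens(I(G,\w)))$: one must confirm that the natural orientation really does force every vertex to receive an incoming edge (so no variable is missing from the $\lcm$) and that weights are not overcounted. This is precisely the computation already carried out in \cref{thm:depth-zero-wo-graphs}, so I would simply point to that argument rather than repeat it.
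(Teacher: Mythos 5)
Your proposal is correct and follows exactly the paper's intended route: the paper derives this corollary by combining \cref{prop:dominant-unicyclic} with Alesandroni's criterion that dominance of $\mingens(I)$ is equivalent to minimality of the Taylor resolution, and then applying \cref{prop:reg-Taylor}, just as you do. Your explicit bookkeeping ($q = |E(G)|$ since distinct edges give distinct minimal generators, and $\lcm(\mingens(I(G,\w))) = \prod_{v\in V(G)} v^{\w(v)}$ because the natural orientation gives every vertex an incoming edge) fills in precisely the details the paper leaves implicit.
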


\section{Depth, dimension, and regularity triples of weighted oriented graphs}\label{sec:triples}
    In this section, we will completely describe the sets $\dd{n}$ and $\ddr{n}$ for any $n\geq 2$.  Our motivation is the following theorem by Kanno.

\begin{theorem}[\protect{{\cite[Corollary 1.7]{kanno2023tuples}}}]\label{thm:Kanno}
    For any $n\geq 2$, we have
    \[
    \ddUnw{n} = \left\{ (a,b) \in \mathbb{Z}^2\colon 1\leq a\leq b \leq n-1,  a\leq b+1 - \left\lceil \frac{b}{n-b} \right\rceil \right\}.
    \]
\end{theorem}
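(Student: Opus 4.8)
The statement is an equality of subsets of $\mathbb{Z}^2$, so the plan is to prove it by two inclusions. For the inclusion $\supseteq$ — that every pair satisfying the three listed constraints is realized by some connected graph on $n$ vertices — I would invoke the explicit constructions of \cite{hkkmt} recorded in the introduction, which already establish exactly this containment. Thus the substance lies entirely in the reverse inclusion $\subseteq$: every connected graph $G$ on $n$ vertices produces a pair $(a,b) = (\depth S/I(G), \dim S/I(G))$ lying in the right-hand set.

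For $\subseteq$, I would first dispatch the three coarse constraints. We always have $a \le b$ since $\depth S/I(G) \le \dim S/I(G)$. Since $G$ is connected with $n \ge 2$, it has an edge, and for any vertex $x_i$ the set $V(G)\setminus\{x_i\}$ is still a vertex cover; hence $V(G)$ is not a minimal cover, so $\mf m \notin \mathrm{Ass}(S/I(G))$ and $a = \depth S/I(G) \ge 1$. Connectivity also forbids a spanning independent set, so the independence number satisfies $b = \dim S/I(G) = \alpha(G) \le n-1$, using $\dim S/I(G) = n - \height I(G)$ and that $\height I(G)$ is the minimum vertex cover number.

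The crux is the subtle inequality $a \le b + 1 - \lceil b/(n-b)\rceil$. Via the Auslander--Buchsbaum formula $a = n - \pd S/I(G)$ and $b = \alpha(G)$, this is equivalent to the projective-dimension lower bound
\[
\pd S/I(G) \ge c - 1 + \left\lceil \frac{b}{c} \right\rceil, \qquad c := n - b = \height I(G).
\]
Here $c$ is the size of a minimum vertex cover $B$, and $A := V(G)\setminus B$ is a maximum independent set of size $b$. Connectivity forces every vertex of $A$ to have a neighbor in $B$, so $B$ dominates the $b$ vertices of $A$ using only $c$ vertices; by pigeonhole some $v \in B$ has at least $\lceil b/c\rceil$ neighbors inside $A$. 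This produces a bouquet (a star $K_{1,\lceil b/c\rceil}$ with center $v$ and independent leaves), which I expect to contribute $\lceil b/c\rceil$ to the projective dimension. To recover the remaining $c-1$, I would attach to each of the other $c-1$ cover vertices a private edge, chosen via a Hall/SDR argument so that the resulting collection of stars is vertex-disjoint and strongly disjoint in the sense required by a nonvanishing-Betti criterion for bouquets; the corresponding lower bound on $\pd$ then yields $\pd S/I(G) \ge \lceil b/c\rceil + (c-1)$, as desired.

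I expect the additive bookkeeping to be the main obstacle: obtaining the precise sum $c-1+\lceil b/c\rceil$, rather than a weaker maximum or an over-count, requires routing the large bouquet and the $c-1$ auxiliary edges through disjoint vertex sets while respecting connectivity and the minimality of $B$. An alternative, probably more robust, route is induction on $n$: delete a carefully chosen vertex (a leaf, or a high-degree cover vertex) and control the change in $(\depth,\dim)$ using the Betti-splitting machinery of \cref{thm:betti-splittings}, reducing to a smaller connected graph and tracking how $b$ and the ceiling term evolve. In either approach the delicate point is the same, namely pinning down the extremal behavior of $\lceil b/(n-b)\rceil$, which is exactly the constraint that distinguishes connected graphs from arbitrary ones.
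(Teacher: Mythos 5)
A preliminary remark: the paper contains no proof of this statement for you to be compared against --- it is imported verbatim from \cite[Corollary 1.7]{kanno2023tuples}, and the inclusion $\subseteq$ (that every connected graph on $n$ vertices satisfies $a\leq b+1-\lceil b/(n-b)\rceil$) is precisely the conjecture of \cite{hku}, which had been verified only for $n\leq 12$ before Kanno's work. Your organization of the problem is correct: $\supseteq$ does follow from the constructions of \cite{hkkmt}, and your treatment of the coarse constraints is sound ($\depth S/I(G)\geq 1$ because $I(G)$ is radical and $V(G)$ is never a minimal vertex cover, so $\mathfrak{m}\notin \mathrm{Ass}(S/I(G))$; and $\dim S/I(G)=\alpha(G)\leq n-1$ by connectivity). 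The entire difficulty is the remaining inequality, and there your argument has a genuine gap.

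Your plan is to exhibit a strongly disjoint set of bouquets with $\lceil b/c\rceil+(c-1)$ total leaves, where $c:=n-b$, and then invoke a Kimura-type nonvanishing criterion to conclude $\pd S/I(G)\geq \lceil b/c\rceil+(c-1)$. This fails on two levels. First, the specific configuration you describe need not satisfy the strong-disjointness (induced matching) condition even when the edges can be chosen: in $K_{2,3}$, with $B$ the side of size $2$, a star at one cover vertex together with a private edge at the other is never strongly disjoint, because $K_{2,3}$ has no induced matching of size $2$; moreover, the Hall condition for the private edges can fail outright when the other cover vertices' neighborhoods in $A$ lie inside $N_A(v)$. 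Second, and decisively, the tool itself is too weak: take $G=C_5$. There $b=2$ and $c=3$, so the required bound is $\pd S/I(C_5)\geq c-1+\lceil b/c\rceil=3$, and indeed $\pd S/I(C_5)=3$ since $C_5$ is Cohen--Macaulay of dimension $2$; but $C_5$ has induced matching number $1$, so every strongly disjoint set of bouquets consists of a single star with at most $2$ leaves, and the bouquet criterion yields only $\pd\geq 2$. Hence no argument whose sole lower bound for projective dimension is the strongly-disjoint-bouquet criterion can prove the theorem. (A bound that does survive this test is $\depth S/I(G)\leq \dim S/P$ for every associated prime $P$, i.e.\ depth is at most the size of the smallest maximal independent set of $G$; but converting that into $a\leq b+1-\lceil b/(n-b)\rceil$ is a nontrivial combinatorial problem, which is the actual content of Kanno's paper.) Your fallback suggestion (vertex deletion plus the Betti splittings of \cref{thm:betti-splittings}) is too vague to evaluate; as written, the proposal does not prove the statement.
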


We first compute some homological invariants of edge ideals of star graphs and complete graphs (and their weighted oriented variants). Recall that a \emph{star graph} is one where all edges contain a fixed vertex, while a \emph{complete graph} is one whose any two vertices form an edge.

\begin{lemma}\label{thm:dim_complete_graph}
        Let $K_n$ be a complete graph on $n \ge 2$ vertices $x_1,x_2,\dots,x_n$. Then $\dim (K_n,\w) = 1$ for any weight function $\w$.
    \end{lemma}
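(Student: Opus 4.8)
The plan is to reduce immediately to the unweighted case and then compute a height. By \cref{lem:dim(G w)=dim(G)}, the weights do not affect the dimension, so $\dim(K_n,\w) = \dim K_n$ for every weight function $\w$; thus it suffices to show that $\dim K_n = 1$, independent of $\w$.

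Since $\dim S/I(K_n) = n - \height I(K_n)$, I would compute $\height I(K_n)$ using the formula recalled in \cref{sec:prelims}, namely the minimal size $t$ of a set of variables $\{x_{i_1}, \dots, x_{i_t}\}$ with $I(K_n) \subseteq (x_{i_1}, \dots, x_{i_t})$. As $I(K_n)$ is generated by $x_ix_j$ for all $i<j$, a monomial prime $(x_{i_1}, \dots, x_{i_t})$ contains $I(K_n)$ precisely when every generator $x_ix_j$ is divisible by some $x_{i_k}$; equivalently, $\{x_{i_1}, \dots, x_{i_t}\}$ is a vertex cover of $K_n$. Hence $\height I(K_n)$ equals the minimum size of a vertex cover of $K_n$.

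It then remains to observe that the minimum vertex cover of $K_n$ has size $n-1$. The set of all vertices but one, say $\{x_1, \dots, x_{n-1}\}$, covers every edge, so $\height I(K_n) \le n-1$; conversely, omitting any two vertices $x_i, x_j$ leaves the edge $x_ix_j$ uncovered, so no $n-2$ variables suffice, giving $\height I(K_n) \ge n-1$. Therefore $\height I(K_n) = n-1$ and $\dim K_n = n - (n-1) = 1$. The argument is entirely routine and presents no genuine obstacle; the only point to verify is the base case $n=2$, where $K_2$ is a single edge, $\height I(K_2) = 1$, and indeed $\dim K_2 = 1$, so the claim holds for all $n \ge 2$.
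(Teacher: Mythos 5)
Your proposal is correct and follows essentially the same route as the paper: reduce to the unweighted case via \cref{lem:dim(G w)=dim(G)}, then compute $\height I(K_n) = n-1$ by exhibiting the cover $(x_1,\dots,x_{n-1})$ and noting that omitting any two vertices $x_r, x_s$ leaves the generator $x_rx_s$ uncovered. Your phrasing of the height as a minimum vertex cover is just a reframing of the paper's identical inclusion-and-contradiction argument.
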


    \begin{proof}
        In light of \cref{lem:dim(G w)=dim(G)}, it suffices to show that $\dim K_n = 1$ for any $n \ge 3$. The edge ideal $I \coloneqq I(K_n)$ is generated by all pairs $x_ix_j$ with $i < j$. First notice that $I \subseteq (x_1,x_2,\dots,x_{n-1})$ implying that $\height(I) \leq n-1$. 
        
        On the other hand, suppose that $I \subseteq (x_{i_1},x_{i_2},\dots,x_{i_t})$ for $t < n-1$. Since $t < n-1$, there exists $x_r,x_s$ such that $x_r,x_s \not\in \{x_{i_1},x_{i_2},\dots,x_{i_t}\}$. This implies $x_rx_s \not\in (x_{i_1},x_{i_2},\dots,x_{i_t})$ contradicting that $I \subseteq (x_{i_1},x_{i_2},\dots,x_{i_t})$. Hence, $\height(I) \ge n-1$ which combined with the above gives $\height(I) = n-1$ and $\dim K_n = 1$
    \end{proof}

It is known that the dimension of a graph is at most $n-1$ (cf.~\cite[Theorem~2.9]{hkkmt}), and it reaches $n-1$ if and only if the graph is a star (cf.~\cite[Lemma~1.3]{hkkmt}). In view of \cref{lem:dim(G w)=dim(G)}, it is of no surprise that the same holds for weighted oriented graphs, despite the fact that the dimension of those can reach $n$. We provide a proof for completion.

\begin{lemma} \label{lem:star}
    Let $G$ be a graph on $n$ vertices $V(G)=\{x_1,\dots, x_n\}$. Then \\$\dim(G, \w) = n-1$ if and only if $G$ is a star. 
\end{lemma}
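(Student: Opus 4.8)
The plan is to first strip away the weighting and reduce the claim to a purely graph-theoretic statement about the height of the edge ideal. By \cref{lem:dim(G w)=dim(G)} the dimension does not see the weight function, so $\dim(G,\w) = \dim G = n - \height I(G)$, and it suffices to prove that $\height I(G) = 1$ if and only if $G$ is a star. I would argue directly from the height formula recalled at the start of \cref{sec:prelims}, namely that $\height I(G)$ is the least $t$ for which $I(G) \subseteq (x_{i_1}, \dots, x_{i_t})$ for some choice of variables; for an edge ideal this is exactly the minimum size of a set of vertices meeting every edge.

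For the backward direction, suppose $G$ is a star with center $x_c$. Then every edge has the form $\{x_c, x_j\}$, so every generator $x_c x_j$ of $I(G)$ lies in $(x_c)$, whence $I(G) \subseteq (x_c)$ and $\height I(G) \leq 1$. Since a star has at least one edge, $I(G) \neq 0$, forcing $\height I(G) \geq 1$; thus $\height I(G) = 1$ and $\dim G = n - 1$. For the forward direction, suppose $\dim G = n - 1$, i.e.\ $\height I(G) = 1$. Then the minimizing set in the height formula is a single variable $x_c$ with $I(G) \subseteq (x_c)$, so every generator, and hence every edge, is divisible by $x_c$; that is, every edge contains the vertex $x_c$. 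By definition this says $G$ is a star.

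The argument is short, and the only point I expect to need care with is the boundary behavior of the definition of \emph{star}. Under the literal reading ``all edges contain a fixed vertex,'' the edgeless graph vacuously qualifies yet has $\dim G = n$, so I would either implicitly assume a star carries at least one edge or record this as a degenerate case to be excluded. Notably, the forward direction sidesteps this automatically, since $\height I(G) = 1 > 0$ already forces the existence of an edge. I would also remark that isolated vertices cause no difficulty: a graph consisting of one edge together with several isolated vertices is still a star under this definition and still has height $1$, consistent with the claim.
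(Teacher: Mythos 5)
Your proof is correct and follows essentially the same route as the paper's: both reduce $\dim = n-1$ to $\height = 1$, and identify containment of the edge ideal in a single principal ideal $(x_c)$ with every edge meeting $x_c$, i.e.\ with $G$ being a star. The only cosmetic difference is that you first strip the weights via \cref{lem:dim(G w)=dim(G)} while the paper works directly with $I(G,\w)$; your remark on the edgeless degenerate case is a point the paper glosses over, but it does not change the substance.
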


\begin{proof}
    We have
    \begin{align*}
        \dim (G, \w) = n - 1 &\iff \height (G, \w) = 1\\
        &\iff I(G, \w) \subseteq (x_i) \text{ for some } 1 \leq i \leq n\\
        &\iff \text{all edges in } G \text{ meet } x_i \text{ for some } i\\
        &\iff G \text{ is a star}.\qedhere
    \end{align*}
\end{proof}

\cref{lem:regularity} allows us to modify base graphs into weighted oriented graphs with manipulatable regularity, but the projective dimension would remain unchanged. As the depth of the base graph is at least 1, we virtually do not know much about those of depth 0. Next we will construct a family of weighted oriented graphs with depth $0$ that will be helpful in our proofs.

    \begin{chunk} \label{special-wographs}
        Let $S = \Bbbk [x_1,x_2,x_3,y_1,y_2,\dots,y_t,z_1,\dots, z_l]$ where $t,l \ge 0$. Let $(\mc G_{t,l,r},\w_{t,l,r})$ be the complete graph on the vertices $\{x_1,x_2,x_3,y_1,\dots,y_t\}$ with leaves $z_1,\dots, z_l$ attached to $x_1$ with the following weight function and orientation:
            \begin{itemize}
                \item $\w_{t,l,r}(x_1)=2+r$.
                \item $\w_{t,l,r}(x_2)=\w_{t,l,r}(x_3) = 2$.
                \item For $j=1,\dots,t$, $\w_{t,l,r}(y_j) = 1$.
                \item For $j=1,\dots,r$, $\w_{t,l,r}(z_j)=1$.
                \item  The $3$-cycle with vertices $x_1, x_2, x_3$ is naturally oriented.
                \item Any edge of the form $x_iy_j$ is oriented toward $y_j$.
                \item Any edge of the form $y_iy_j$ is arbitrarily oriented.
                \item Any edge of the form $x_1z_j$ is oriented toward $z_j$.
            \end{itemize}

        Explicitly, the edge ideal of $(\mc G_{t,l,r}, \w_{t,l,r})$ is
        \begin{equation*}
            \begin{split}
                I(\mc G_{t,l,r}, \w_{t,l,r}) = (x_1^{2+r} x_2, x_2^2x_3, x_3^2x_1) &+ \sum_{j=1}^t(x_1y_j,x_2y_j,x_3y_j)\\
                &+ (\{y_iy_j \colon i<j\} + \{x_1 z_j\colon 1 \leq j \leq l\}).
            \end{split}
        \end{equation*}

        \begin{figure}[ht!]
            \centering
            \begin{subfigure}{0.49\linewidth}
                \centering
                \includegraphics[width = 0.49\linewidth]{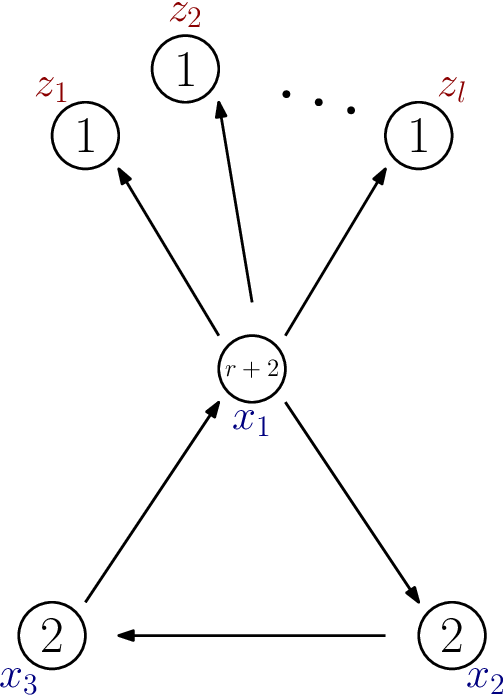}
                \subcaption{\((\mc G_{0,l,r}, \w_{0,l,r})\).}
            \end{subfigure}
            \begin{subfigure}{0.49\linewidth}
                \centering
                \includegraphics[width = 0.49\linewidth]{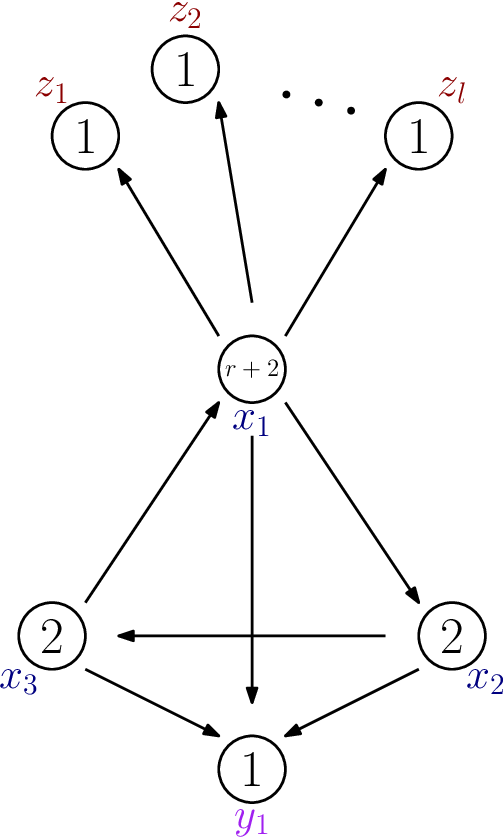}
                \subcaption{\((\mc G_{1,l,r}, \w_{1,l,r})\).}
            \end{subfigure}
            \caption{Two graphs described by the conditions in \cref{special-wographs}.}
            \label{fig:chunk-4.4-exs}
        \end{figure}
    \end{chunk}

    \begin{proposition} \label{prop:dd_complete_graph}
            For any $t,l,r$, the weighted oriented graph $(\mc G_{t,l,r}, \w_{t,l,r})$ has $3 + t + l$ vertices and satisfies $\depth (\mc G_{t,l,r}, \w_{t,l,r}) = 0$ and $\dim (\mc G_{t,l,r}, \w_{t,l,r}) = 1+l$.
    \end{proposition}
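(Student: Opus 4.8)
The plan is to check the three assertions separately: the vertex count is immediate, the dimension reduces via the radical computation of \cref{lem:dim(G w)=dim(G)} to a vertex‑cover count on the base graph, and the depth‑zero claim is a direct application of the characterization in \cref{thm:depth-zero-wo-graphs}. For the vertex count, I would simply observe that the vertices are $x_1,x_2,x_3$, the $t$ vertices $y_1,\dots,y_t$, and the $l$ leaves $z_1,\dots,z_l$, giving exactly $3+t+l$ vertices.

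For the dimension, I would first invoke \cref{lem:dim(G w)=dim(G)} to get $\dim(\mc G_{t,l,r},\w_{t,l,r}) = \dim \mc G_{t,l,r}$, so that the weights and orientation become irrelevant and it suffices to compute $\height I(\mc G_{t,l,r})$, which for an edge ideal of a simple graph equals the minimum size of a vertex cover. The base graph is the complete graph $K_{3+t}$ on $\{x_1,x_2,x_3,y_1,\dots,y_t\}$ together with the $l$ pendant edges $\{x_1,z_j\}$. A vertex cover can omit at most one vertex of $K_{3+t}$ (omitting two would leave their joining edge uncovered), so it must use at least $2+t$ of those vertices; conversely, taking $C = \{x_1,x_2,x_3,y_1,\dots,y_t\}\setminus\{x_2\}$ is a cover of $K_{3+t}$ of size $2+t$, and since $x_1\in C$ it also covers every pendant edge. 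Hence the minimum cover has size exactly $2+t$, and $\dim \mc G_{t,l,r} = (3+t+l)-(2+t) = 1+l$.

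For the depth, I would exhibit a spanning subgraph satisfying the criterion of \cref{thm:depth-zero-wo-graphs}. Let $H$ be the subgraph on all $3+t+l$ vertices consisting of the (inherited) oriented $3$-cycle on $x_1,x_2,x_3$ together with the pendant edges $x_1\to y_j$ for $1\le j\le t$ and $x_1\to z_j$ for $1\le j\le l$; each of these edges, with its stated orientation, is present in $\mc G_{t,l,r}$. Then $H$ is connected and contains a single cycle, so it is a maximal pseudo-forest; its cycle carries the natural orientation and every pendant edge points away from the cycle vertex $x_1$, so $H$ is naturally oriented; and every vertex of $H$ is either one of $x_1,x_2,x_3$ (of weight $\ge 2$) or one of the $y_j,z_j$ (a leaf of $H$). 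Thus $H$ is a weighted naturally oriented maximal pseudo-forest on the full vertex set whose vertices are leaves or have weight at least $2$, and \cref{thm:depth-zero-wo-graphs} yields $\depth(\mc G_{t,l,r},\w_{t,l,r}) = 0$.

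The genuinely routine steps are the vertex count and the verification that $H$ meets the pseudo‑forest conditions. The only point demanding slight care is the vertex‑cover computation: one must confirm that forcing $x_1$ into the cover, so that the pendant edges are absorbed for free, does not push the count above the $2+t$ already forced by $K_{3+t}$. I do not anticipate a serious obstacle here, since \cref{lem:dim(G w)=dim(G)} and \cref{thm:depth-zero-wo-graphs} do the heavy lifting and reduce both nontrivial claims to short combinatorial checks.
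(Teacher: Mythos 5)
Your proposal is correct and takes essentially the same approach as the paper: the depth-zero claim uses the identical spanning subgraph (the naturally oriented $3$-cycle on $x_1,x_2,x_3$ together with the pendant edges from $x_1$ to the $y_j$ and $z_j$) fed into \cref{thm:depth-zero-wo-graphs}. For the dimension, the paper obtains the height lower bound by citing \cref{thm:dim_complete_graph} and comparing with $I(\mc G_{t,0,r},\w_{t,0,r})$, and the upper bound from the containment $I(\mc G_{t,l,r},\w_{t,l,r})\subseteq (x_1,x_2,y_1,\dots,y_t)$; your reduction to the base graph via \cref{lem:dim(G w)=dim(G)} followed by the minimum-vertex-cover count is the same computation in substance, merely inlined rather than routed through the complete-graph lemma.
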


    \begin{proof}
 That $\depth (\mc G_{t,l,r}, \w_{t,l,r}) = 0$ follows from \cref{thm:depth-zero-wo-graphs}, since the subgraph of $(G_{t,l,r}, \w_{t,l,r})$ consisting of just the cycle with vertices $x_1, x_2, x_3$, and the edges between $x_1$ and the $y_i$ and $z_j$, is a maximal pseudo-forest satisfying the conditions listed in \cref{thm:depth-zero-wo-graphs}.

Now, for the dimension statement, first note that $\dim(\mc G_{t,0,r}, \w_{t,0,r}) = 1$ by \cref{thm:dim_complete_graph}. By definition, the height of $I(\mc G_{t,0,r}, \w_{t,0,r})$ does not depend on whether it is viewed as an ideal in $\Bbbk[x_1, x_2, x_3, y_1, \dots, y_t]$ or $\Bbbk[x_1, x_2, x_3, y_1, \dots, y_t, z_1, \dots, z_l]$. We thus have that \[\height I(G_{t,l,r}, \w_{t,l,r}) \geq \height I(G_{t,0,r}, \w_{t,0,r})=  (3+t)-1 = 2+t.\] On the other hand, \[I(G_{t,l,r}, \w_{t,l,r}) \subseteq (x_1, x_2, y_1, \dots, y_t)\] which shows that $\height I(G_{t,l,r}, \w_{t,l,r}) \leq 2 + t$. Hence $\height (G_{t,l,r}, \w_{t,l,r}) = 2 + t$, and since the graph $(G_{t,l,r}, \w_{t,l,r})$ has $3+t+l$ vertices, it follows that $\dim (G_{t,l,r}, \w_{t,l,r}) = (3+t+l)-(2+t) = 1 + l$.    \end{proof}

\begin{proposition} \label{prop:reg-complete-graphs-with-leaves}
    For all $t, l, r \geq 0$, $\reg (\mc G_{t,l,r}, \w_{t,l,r}) = 3 + r$. 
\end{proposition}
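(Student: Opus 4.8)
The plan is to induct on $t+l$, peeling off one vertex at a time and propagating the value $3+r$ through a Betti splitting. The base case $t=l=0$ is the weighted triangle on $x_1,x_2,x_3$ with weights $2+r,2,2$ and natural orientation; this is a naturally oriented maximal pseudo-forest all of whose vertices have weight at least $2$, so \cref{cor:regularity-pseudo-forest} applies verbatim and gives $\reg(\mc G_{0,0,r},\w_{0,0,r})=(2+r)+2+2-3=3+r$. For the inductive step I would peel off a leaf $z_l$ when $l>0$, and otherwise peel off the vertex $y_t$ when $t>0$, reducing to $\mc G_{t,l-1,r}$ or $\mc G_{t-1,l,r}$, each of which has regularity $3+r$ by the inductive hypothesis.

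In either case I set $I\coloneqq I(\mc G_{t,l,r},\w_{t,l,r})$ and split on the peeled vertex, so that $J$ is generated by the edges meeting that vertex and $K$ is the edge ideal of the smaller graph. For a leaf, $J=(x_1z_l)$ is principal; for $y_t$, we have $J=y_t\cdot(x_1,x_2,x_3,y_1,\dots,y_{t-1})$, a variable times an ideal generated by variables. In both cases $J$ has a linear resolution, so the partition $\mingens(I)=\mingens(J)\sqcup\mingens(K)$ is a Betti splitting by \cref{thm:betti-splittings} (the criterion needs only that one of the two summands be linear), and hence
\[
\reg S/I=\max\{\reg S/J,\ \reg S/K,\ \reg S/(J\cap K)-1\}.
\]
Here $\reg S/J=1$ in both cases and $\reg S/K=3+r$ by induction, so the entire computation reduces to controlling the intersection term and showing $\reg S/(J\cap K)=4+r$.

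The heart of the argument---and the step I expect to be the main obstacle---is this last regularity computation. For $y_t$ one checks that every generator of $K$ is divisible by one of $x_1,x_2,x_3,y_1,\dots,y_{t-1}$, so $K\subseteq(x_1,x_2,x_3,y_1,\dots,y_{t-1})$ and therefore $J\cap K=y_tK$; since multiplying an ideal by a degree-$d$ monomial raises the regularity of the quotient by $d$, this gives $\reg S/(J\cap K)=\reg S/K+1=4+r$. For a leaf, $J\cap K=(x_1z_l)\cap K=x_1z_l\,(K:x_1)$, so $\reg S/(J\cap K)=\reg S/(K:x_1)+2$, and it remains to show $\reg S/(K:x_1)=2+r$. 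Forming the colon kills the variables $y_j$ and $z_j$ and reduces the computation to $\reg \Bbbk[x_1,x_2,x_3]/(x_1^{1+r}x_2,\,x_3^2,\,x_2^2x_3)$. These three monomials form a dominant set, so this ideal has a minimal Taylor resolution by \cite[Theorem 4.4]{Ale20}, and \cref{prop:reg-Taylor} yields $\reg=\deg\bigl(x_1^{1+r}x_2^2x_3^2\bigr)-3=(5+r)-3=2+r$. With all three entries of the Betti-splitting maximum computed, each is at most $3+r$ and the value $3+r$ is attained, closing the induction.
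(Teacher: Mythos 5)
Your proof is correct, and it overlaps with the paper's proof in one half while diverging in the other. The inductive step that peels $y_t$ via the Betti splitting $I = y_t(x_1,x_2,x_3,y_1,\dots,y_{t-1}) + I(\mc G_{t-1,l,r},\w_{t-1,l,r})$, with intersection $y_tK$ and $\reg S/(y_tK) = \reg S/K + 1$, is exactly the paper's induction. Where you differ is in the treatment of the leaves: the paper never peels them. It inducts on $t$ alone and takes as base case $t=0$ \emph{with all $l$ leaves present}, observing that $\mc G_{0,l,r}$ --- the triangle with $l$ pendant edges oriented away from $x_1$ --- is already a weighted naturally oriented maximal pseudo-forest whose vertices are leaves or have weight at least $2$, so \cref{cor:regularity-pseudo-forest} gives $\reg = \bigl((2+r)+2+2+l\bigr) - (3+l) = 3+r$ in one shot. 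You instead start from the bare triangle and remove leaves one at a time, which forces the extra computation $J \cap K = x_1z_l(K:x_1)$ and then $\reg\, \Bbbk[x_1,x_2,x_3]/(x_1^{1+r}x_2, x_2^2x_3, x_3^2) = 2+r$ via the dominant-set criterion of \cite[Theorem 4.4]{Ale20} and \cref{prop:reg-Taylor}. That computation is correct (the three monomials do form a dominant set, even when $r=0$), so your route works; it is simply longer, trading the single pseudo-forest observation for an additional colon-ideal argument. What your version buys is a demonstration that the leaf contribution can be handled purely by splitting machinery, without noticing that the whole base configuration falls under \cref{cor:regularity-pseudo-forest}.

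One caution: your parenthetical gloss that the splitting criterion ``needs only that one of the two summands be linear'' is not an accurate statement of \cref{thm:betti-splittings} (i.e., \cite[Corollary 2.7]{FHV09}): the hypothesis must be placed on the summand consisting of \emph{all} generators divisible by the chosen splitting variable. Your two applications are nevertheless valid, because in each case that distinguished summand --- $(x_1z_l)$ for the variable $z_l$, and $y_t(x_1,\dots,y_{t-1})$ for the variable $y_t$ --- is precisely the one with the linear resolution.
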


\begin{proof}
We rewrite the monomial generators of $(\mc G_{t,l,r}, \w_{t,l,r})$ as follows:
\[I(\mc G_{t,l,r},\w_{t,0,r}) = (x_1^{2+r}x_2, x_2^2x_3,x_3^2x_1) + \sum_{j=1}^t(x_1y_j,x_2y_j,x_3y_j) +\]\[ (\{y_iy_j \colon i<j\} + \{x_1 z_j: 1 \leq j \leq l\})\]\[= (x_1^{2+r}x_2, x_2^2 x_3, x_3^2 x_1, x_1 z_1, \dots, x_1 z_l) + \sum_{j=1}^t (x_1 y_j, x_2 y_j, x_3y_j, y_1 y_j, y_2 y_j, \dots, y_{j-1} y_j)\]
We induce on $t$ (with $l$ and $r$ fixed). The base $t = 0$ follows directly from \cref{cor:regularity-pseudo-forest}. For the induction step, suppose that the proposition holds for some $l \geq 0$. Then note that 
\[I(\mc G_{t+1, l, r}, \w_{t+1,l,r}) = I(\mc G_{t,l,r}, \w_{t,l,r}) +  y_{t+1}(x_1, x_2, x_3, y_1, \dots, y_t)\]
is a Betti splitting by \cref{thm:betti-splittings} since the latter ideal is an edge ideal of a star graph, thus has linear resolution (cf. \cite{Froberg}); hence, by \cref{thm:betti-splittings} again, we can compute $\reg (\mc G_{t+1,l,r}, \w_{t+1,l,r})$ as follows: 
\begin{align*}
    \reg (\mc G_{t+1,l,r}, \w_{t+1, l, r}) &= \begin{multlined}[t]
        \max \{\reg (\mc G_{t,l,r}, \w_{t,l,r}), \reg (x_1y_{t+1}, \dots, y_t y_{t+1}),\\
        \reg (I(\mc G_{t,l,r}, \w_{t,l,r}) \cap (x_1y_{t+1}, \dots, y_t y_{t+1})) - 1\}
    \end{multlined}\\
    &= \max \{3 + r, 1, \reg y_{t+1} I(\mc G_{t,l,r}, \w_{t,l,r}) - 1\}\\
    &=\max \{3+r, 1, 1 + (3+r) - 1\} = 3 + r,
\end{align*}
as desired.
\end{proof}

Now we are ready to prove the main theorem of this section, giving an explicit description of $\dd{n}$.
 
    \begin{theorem}\label{thm:dd-wo}
        For any $n\geq 2$, we have
                \[\dd{n}  = \ddUnw{n} \cup \{ (0,b) \colon 1\leq b \leq n-2 \}.\]
    \end{theorem}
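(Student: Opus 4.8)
The plan is to prove the two inclusions separately, with the essential new content being the containment of $\{(0,b)\colon 1\leq b\leq n-2\}$ and the verification that no \emph{new} depth-dimension pairs arise beyond $\ddUnw{n}$ together with these depth-zero pairs. First I would handle the easy inclusion $\dd{n}\supseteq \ddUnw{n}$: given any pair $(a,b)\in\ddUnw{n}$ with $a\geq 1$, there is a connected graph $G$ on $n$ vertices realizing $\depth G=a$, $\dim G=b$ by \cref{thm:Kanno}; applying \cref{lem:construct-new-graph} with $r=0$ produces a connected weighted oriented graph $(G,\w)$ with the same depth and dimension, so $(a,b)\in\dd{n}$. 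This uses the fact that \cref{lem:construct-new-graph} preserves both depth and dimension (taking $r=0$).

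Next I would establish $\dd{n}\supseteq\{(0,b)\colon 1\leq b\leq n-2\}$, which is exactly what \cref{prop:dd_complete_graph} is designed for. Given $b$ with $1\leq b\leq n-2$, set $l=b-1\geq 0$ and $t=n-3-l=n-2-b\geq 0$ (this is where the bound $b\leq n-2$ is needed to keep $t\geq 0$), and take $r=0$. Then $(\mc G_{t,l,0},\w_{t,l,0})$ has $3+t+l=n$ vertices, is connected (it is a complete graph on $\{x_1,x_2,x_3,y_1,\dots,y_t\}$ with leaves attached), and by \cref{prop:dd_complete_graph} has depth $0$ and dimension $1+l=b$. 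Hence $(0,b)\in\dd{n}$.

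For the reverse inclusion $\dd{n}\subseteq \ddUnw{n}\cup\{(0,b)\colon 1\leq b\leq n-2\}$, take any connected weighted oriented graph $(G,\w)$ on $n$ vertices and let $(a,b)=(\depth(G,\w),\dim(G,\w))$. The case $a\geq 1$ is handled by comparison to the base graph: since $\depth(G,\w)\leq \depth G$ by \cref{prop:depth(G w)-leq-depth(G)} and $\dim(G,\w)=\dim G$ by \cref{lem:dim(G w)=dim(G)}, we have $1\leq a\leq \depth G$ and $b=\dim G$; but the base pair $(\depth G,\dim G)$ lies in $\ddUnw{n}$, and $\ddUnw{n}$ is downward-closed in its first coordinate (by the explicit description in \cref{thm:Kanno}, decreasing $a$ while keeping $b$ fixed preserves both $1\leq a$ and $a\leq b+1-\lceil b/(n-b)\rceil$), so $(a,b)\in\ddUnw{n}$. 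The remaining case $a=0$ requires showing $b\leq n-2$, equivalently $b\notin\{n-1,n\}$. Since $\dim(G,\w)=n$ would force $I(G,\w)=0$, impossible for a connected graph on $n\geq 2$ vertices with at least one edge, we rule out $b=n$; and $b=n-1$ forces $G$ to be a star by \cref{lem:star}, whose edge ideal has a leaf at every non-center vertex, giving depth at least $1$ (a star is a tree, so its base graph has positive depth, and one checks the weighted oriented version cannot have depth $0$ via \cref{thm:depth-zero-wo-graphs} since a star contains no unicyclic subgraph spanning all vertices). Thus $a=0$ forces $b\leq n-2$.

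The main obstacle is the $a=0$, $b=n-1$ exclusion: one must argue that a star can never yield depth zero. The clean route is \cref{thm:depth-zero-wo-graphs}, which says depth $0$ requires a spanning naturally oriented maximal pseudo-forest subgraph, i.e., a spanning subgraph in which every connected component is unicyclic and hence contains a cycle; but a star on $n\geq 2$ vertices is a tree and contains no cycle, so no such subgraph exists, and therefore a weighted oriented star always has depth at least $1$. Assembling the two inclusions completes the proof.
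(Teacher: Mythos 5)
Your proof is correct and follows essentially the same route as the paper's: the same two inclusions, the same key ingredients (\cref{thm:Kanno}, \cref{lem:dim(G w)=dim(G)}, \cref{prop:depth(G w)-leq-depth(G)}, \cref{thm:depth-zero-wo-graphs}, \cref{lem:star}), and the same realizing family $(\mc G_{n-2-b,\,b-1,\,0}, \w_{n-2-b,\,b-1,\,0})$ from \cref{prop:dd_complete_graph} for the depth-zero pairs. The only cosmetic differences are that you invoke \cref{lem:construct-new-graph} with $r=0$ where the paper simply observes that a graph is a weighted oriented graph with all weights $1$, and that you make explicit two points the paper leaves implicit (the downward-closedness of $\ddUnw{n}$ in the first coordinate and the exclusion of $b=n$).
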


    \begin{proof}
First, we will show the inclusion 
\[\dd{n}  \subseteq \ddUnw{n} \cup \{ (0,b) \colon 1\leq b \leq n-2 \}.\]
Consider a pair $(a, b) \in \dd{n}$. Then there exists a weighted oriented graph $(G, \w)$ with $\depth (G, \w) = a$ and $\dim (G, \w) = b$. Set $a' = \depth G$. By \cref{lem:dim(G w)=dim(G)}, $\dim G = \dim(G,\w) = b$, so $(a', b) \in \ddUnw{n}$. Also, by \cref{prop:depth(G w)-leq-depth(G)}, $a \leq a'$, so if $a \neq 0$, \cref{thm:Kanno} implies that $(a, b) \in \ddUnw{n}$. Now suppose that $a = 0$. By \cref{thm:depth-zero-wo-graphs} and \cref{lem:star}, we have that $b \neq n-1$: no subgraph of a star graph on three or more vertices, with the same number of vertices as the star graph, can be a maximal pseudo-forest, since such a subgraph would have to be a tree. Thus, $(a, b) \in \{(0, b) \colon 1 \leq b \leq n-2\}$. 

Now, let $G$ be any graph on $n$ vertices. The ideal of $G$ is the same as the ideal of the weighted oriented graph $(G, \w)$, where $\w$ assigns a weight of $1$ to each vertex of $G$ and the edges are arbitrarily oriented. Thus, $\ddUnw{n} \subseteq \dd{n}$. For the inclusion $\{(0, b) \colon 1 \leq b \leq n-2\} \subseteq \dd{n}$, we have from \cref{prop:dd_complete_graph} that $(\mc G_{n-2-b, b-1, 0}, \w_{n-2-b, b-1, 0})$ is a weighted oriented graph on $n$ vertices with depth equal to $0$ and dimension equal to $b$.
    \end{proof}

Next we will determine $\ddr{n}$, the set of all triples of depth, dimension, and regularity of weighted oriented graphs. First we need some results on regularity of weighted oriented cycles.

\begin{lemma} \label{lem:reg-cycles}
Let $C_n$ denote the naturally-oriented cyclic graph on $n$ vertices (where $n \geq 3$), and let $\w$ be a weight function on $C_n$ such that $\w(v) \geq 2$ for all vertices $v \in C_n$.  Then

\[\reg (C_n, \w) = (\sum_{v \in C_n} \w(v)) - n.\]

In particular, $\reg (C_n, \w) \geq n$. 
\end{lemma}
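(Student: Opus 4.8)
The plan is to recognize that a single naturally-oriented cycle is the simplest instance of the objects already analyzed in \cref{sec:wog-depth-0}, so that the regularity formula drops out of \cref{cor:regularity-pseudo-forest} with essentially no extra work.

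First I would observe that $C_n$, viewed as a connected graph whose unique induced cycle is itself, is a unicyclic graph and therefore a maximal pseudo-forest in the sense defined earlier. Since every vertex of $C_n$ has degree $2$, the cycle has no leaves, so the hypothesis $\w(v) \geq 2$ for all $v$ is precisely the condition ``each vertex is either a leaf or has weight at least $2$'' required by \cref{cor:regularity-pseudo-forest}. The natural orientation of $C_n$ matches the natural-orientation hypothesis there as well: with no trees attached, the condition is simply that the cycle edges run consistently in one direction. Thus $(C_n, \w)$ satisfies all the hypotheses of \cref{cor:regularity-pseudo-forest}.

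Applying that corollary with $|E(C_n)| = n$ yields immediately
\[
    \reg(C_n, \w) = \sum_{v \in C_n} \w(v) - |E(C_n)| = \sum_{v \in C_n} \w(v) - n,
\]
which is the claimed formula. For the final assertion I would simply bound the sum from below: since $\w(v) \geq 2$ for each of the $n$ vertices, $\sum_{v \in C_n} \w(v) \geq 2n$, and hence $\reg(C_n, \w) \geq 2n - n = n$.

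There is no genuine obstacle here, as all the content has been front-loaded into \cref{prop:dominant-unicyclic} and \cref{cor:regularity-pseudo-forest}. If a self-contained argument were wanted instead, the only step requiring care would be verifying directly that $\mingens(I(C_n, \w))$ is a dominant set (so that the Taylor resolution is minimal), after which \cref{prop:reg-Taylor} computes $\reg(C_n,\w) = \deg\bigl(\lcm(\mingens(I(C_n,\w)))\bigr) - n = \sum_{v \in C_n} \w(v) - n$; but \cref{prop:dominant-unicyclic} already certifies dominance, so invoking the corollary is the shortest route.
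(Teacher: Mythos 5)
Your proof is correct and takes the same route as the paper, whose entire proof reads ``This is a special case of \cref{cor:regularity-pseudo-forest}.'' You have simply spelled out the verification that $(C_n,\w)$ meets that corollary's hypotheses and the elementary bound $\sum_{v}\w(v)\geq 2n$ for the final assertion, which is exactly the intended argument.
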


\begin{proof}
    This is a special case of \cref{cor:regularity-pseudo-forest}. 
\end{proof}

\begin{theorem} \label{thm:ddr-wo}
        For any $n \ge 2$, we have
            \begin{align*}
                \ddr{n} = (\ddUnw{n} \times \NN) \cup \{ (0,b,c) \colon 1\leq b\leq n-2,\ c\geq 3\}.
            \end{align*}
    \end{theorem}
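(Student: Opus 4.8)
The plan is to prove the two inclusions separately, using \cref{thm:dd-wo} to pin down the depth–dimension pair and concentrating the genuinely new work on the regularity coordinate. One standard fact I will use repeatedly: writing $\mathbf 1=(1,\dots,1)$, the top multigraded Betti numbers of any monomial ideal $I\subseteq S$ compute the socle, $\beta_{n,\mathbf a}(S/I)=\dim_\Bbbk\bigl(\operatorname{soc}(S/I)\bigr)_{\mathbf a-\mathbf 1}$ (this is the top homology of the Koszul complex). Consequently, a nonzero socle element of $S/I$ of degree $d$ forces $\beta_{n,\mathbf a}(S/I)\ne 0$ for some $\mathbf a$ with $|\mathbf a|=d+n$, whence $\reg S/I\ge d$.

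For the inclusion ``$\subseteq$'', take $(a,b,c)\in\ddr{n}$ realized by a connected weighted oriented graph $(G,\w)$; by \cref{thm:dd-wo} we have $(a,b)\in\dd{n}$. If $a\ge 1$, then since the depth-$0$ stratum of $\dd{n}$ has first coordinate $0$ we must have $(a,b)\in\ddUnw{n}$, and as $I(G,\w)\ne 0$ we have $c=\reg(G,\w)\ge 1$, so $(a,b,c)\in\ddUnw{n}\times\NN$. The substantive case is $a=0$, where \cref{thm:dd-wo} gives $1\le b\le n-2$ and I must show $c\ge 3$. Here I invoke \cref{thm:depth-zero-wo-graphs} to find inside $(G,\w)$ a naturally oriented maximal pseudo-forest $(H,\w)$ on all $n$ vertices, each vertex a leaf or of weight $\ge 2$. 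In the natural orientation every vertex of $H$ has in-degree exactly $1$, so $\lcm(\mingens(I(H,\w)))=\prod_v v^{\w(v)}$; set $z\coloneqq\prod_v v^{\w(v)-1}$. No generator $uw^{\w(w)}$ of $I(G,\w)$ divides $z$ (the exponent of $w$ in $z$ is only $\w(w)-1$), so $z\notin I(G,\w)$; and for each vertex $v$ its unique $H$-in-neighbor $u$, having an out-edge, cannot be a leaf and so has $\w(u)\ge 2$, whence $uv^{\w(v)}$ divides $vz$ and $vz\in I(G,\w)$. Thus $z$ is a nonzero socle element, and by the remark above $\reg(G,\w)\ge\deg z=\sum_v(\w(v)-1)$. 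Since the cycle $C$ of any component of $H$ has length $\ge 3$ with all its vertices of weight $\ge 2$, this yields $\reg(G,\w)\ge|C|\ge 3$.

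For ``$\supseteq$'' I realize each target triple. The depth-$0$ triples $(0,b,c)$ with $1\le b\le n-2$ and $c\ge 3$ come from the family $(\mc G_{t,l,r},\w_{t,l,r})$ of \cref{special-wographs} with $t=n-2-b$, $l=b-1$, $r=c-3$: \cref{prop:dd_complete_graph} and \cref{prop:reg-complete-graphs-with-leaves} give depth $0$, dimension $1+l=b$, and regularity $3+r=c$, and the constraints $t,l,r\ge 0$ are exactly $1\le b\le n-2$ and $c\ge 3$. For a positive-depth triple $(a,b,c)$ with $(a,b)\in\ddUnw{n}$ and $c\ge 1$, I start from a connected simple graph $G$ on $n$ vertices with $\depth G=a$, $\dim G=b$, and $\reg G=1$ (a linear resolution), which exists because $(a,b,1)\in\ddrUnw{n}$; applying \cref{lem:construct-new-graph} with $r=c-1$ produces a weighted oriented graph with unchanged depth $a$ and dimension $b$ and with regularity $1+(c-1)=c$.

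The main obstacle is twofold. The new technical heart is the depth-$0$ bound $\reg\ge 3$: everything rests on writing down the explicit socle element $z$ and verifying the two divisibility conditions, and the natural orientation is precisely what makes in-degrees equal to $1$ and forces in-neighbors to be non-leaves of weight $\ge 2$, so that $z$ genuinely lands in the socle. The other delicate point is the very bottom of the regularity range in the positive-depth case: since any genuinely weighted generator has degree $\ge 3$, regularity $1$ can only be achieved by an honest simple graph with a linear resolution, so the clean product $\ddUnw{n}\times\NN$ forces the statement $(a,b,1)\in\ddrUnw{n}$ for \emph{every} $(a,b)\in\ddUnw{n}$. I would isolate this as the single external input, extracting it from Kanno's explicit determination of $\ddrUnw{n}$ (equivalently, exhibiting for each such pair a connected co-chordal realizer); granted it, \cref{lem:construct-new-graph} sweeps out all larger regularities and closes the argument.
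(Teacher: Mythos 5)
Your proposal is correct. Its ``$\supseteq$'' half is the paper's own argument: depth-zero triples are realized by $(\mc G_{n-2-b,\,b-1,\,c-3}, \w_{n-2-b,\,b-1,\,c-3})$ via \cref{prop:dd_complete_graph} and \cref{prop:reg-complete-graphs-with-leaves}, and positive-depth triples by taking a regularity-one realizer of $(a,b)$ and applying \cref{lem:construct-new-graph}. (You are in fact more careful than the paper on one point: the paper cites \cref{thm:Kanno} for the existence of a connected graph with depth $a$, dimension $b$, and regularity $1$, although that theorem as stated only controls the pair $(a,b)$; as you say, this input really comes from Kanno's determination of $\ddrUnw{n}$, and isolating it explicitly is the right thing to do.) Where you genuinely diverge is the key inequality of the ``$\subseteq$'' half, namely that depth $0$ forces regularity at least $3$. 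The paper extracts from \cref{thm:depth-zero-wo-graphs} a naturally oriented cycle all of whose vertices have weight at least $2$, takes a shortest such cycle so that it is induced, and combines the Restriction Lemma of Herzog--Hibi--Zheng with \cref{lem:reg-cycles} (a special case of \cref{cor:regularity-pseudo-forest}) to conclude $\reg(G,\w)\geq 3$. You instead produce the explicit socle element $z=\prod_{v} v^{\w(v)-1}$ attached to the spanning naturally oriented pseudo-forest $H$: every vertex of $H$ has in-degree exactly one, in-neighbors have out-edges and hence are not leaves, so they have weight at least $2$, which gives $vz\in I(H,\w)\subseteq I(G,\w)$ for every vertex $v$, while no generator $uw^{\w(w)}$ can divide $z$; then the standard identification of $\beta_{n,\mathbf{a}}(S/I)$ with the socle in multidegree $\mathbf{a}-\mathbf{1}$ (top Koszul homology) yields $\reg(G,\w)\geq\deg z\geq 3$. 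Both routes are sound. Yours is more self-contained --- it avoids the Restriction Lemma and the mildly delicate step that a shortest heavy naturally oriented cycle is induced in $G$ --- and it actually proves the stronger bound $\reg(G,\w)\geq\sum_{v\in V(G)}(\w(v)-1)$ for every depth-zero weighted oriented graph. The paper's route is shorter given the machinery it has already built (\cref{cor:regularity-pseudo-forest}), and localizing the bound to a single induced cycle is the form it reuses in \cref{thm:bipartite-wo}, where bipartiteness upgrades the cycle length to $\geq 4$ and the bound to $\reg\geq 4$; your socle argument would need the same cycle-length observation (summing $\w(v)-1$ over the cycle of each component) to recover that refinement.
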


\begin{proof} 
The inclusion 
\[\ddr{n} \subseteq (\ddUnw{n} \times \NN) \cup \{(0,b,c)\colon 1\leq b \leq n-2,\ c \geq 3\}\]
amounts to the assertion that a weighted oriented graph $(G, \w)$ with depth equal to $0$ must have regularity at least $3$. By \cref{thm:depth-zero-wo-graphs}, $(G, \w)$ must contain a naturally-oriented cycle $C_n$, with all vertices of $C_n$ having weight at least $2$. Suppose that $n$ is minimal, so that $C_n$ is induced. By the Restriction Lemma (\cite[Lemma~4.4]{HHZ04}), we then have that $\reg (G, \w) \geq \reg (C_n, \w|_{C_n})$. By \cref{lem:reg-cycles}, $\reg(C_n, \w|_{C_n}) \geq n \geq 3$, so the regularity of $(G, \w)$ must be at least $3$, as desired. 

For the inclusion $\ddUnw{n} \times \NN \subseteq \ddr{n}$, suppose that $(a, b) \in \ddUnw{n}$ and $r \in \NN$. By Theorem~\ref{thm:Kanno}, there exists a graph $G$ with $\reg (G) = 1$. By Lemma~\ref{lem:construct-new-graph}, there exists a weight function $\w$ on vertices of $G$ and an orientation on $G$ such that 
\begin{align*}
    \dim(G,\w) &= \dim G = a, \\
    \depth(G,\w) &= \depth G= b, \\
    \text{and } \reg(G,\w) &= \reg G + (r -1) = r.
\end{align*}

Thus, $(a,b,r) \in \ddr{n}$. The remaining inclusion $\{(0,b,c)\colon 1 \leq b \leq n-2,\  c \geq 3\} \subseteq \ddr{n}$ follows directly from Propositions~\ref{prop:dd_complete_graph} and  \ref{prop:reg-complete-graphs-with-leaves}: if $1 \leq b \leq n-2$ and $c \geq 3$, then $(\mc G, \w) = (\mc G_{n-2-b, b-1, c-3}, \w_{n-2-b, b-1, c-3})$ is a graph on $n$ vertices with 
\[(\dim(\mc G, \w), \depth(\mc G, \w), \reg(\mc G, \w) = (0, b, c),\]
as desired.
\end{proof}

\section{Betti table sizes of edge ideals of weighted oriented trees and connected bipartite graphs}\label{sec:betti-tables}

In this section, we determine the possible sizes of Betti tables of (connected) weighted oriented trees and bipartite graphs. Recall that (graded) Betti numbers record the shifts and ranks of free modules appearing in the minimal free resolution of $S/I$, where $I$ is a monomial ideal, and Betti table is a table of all (graded) Betti numbers. By definition, the horizontal length of the Betti table is the projective dimension of $S/I$, and the length of its is the regularity of $S/I$ (by definition of regularity). The size of a Betti table, therefore, is a pair of projective dimension and regularity.

All graphs in this section are assumed to be connected. The motivation for this section is due to the following results of Erey and Hibi who explicitly determined $\bptUnw{n}$ and $\treeUnw{n}$ for any $n\geq 4$. Correspondingly, we consider projective dimension in this section, rather than the complementary notion of depth.

\begin{theorem}[\protect{{\cite[Theorems 3.13 and 3.14]{EH21}}}] \label{thm:bpt-and-tree}
    Let $n \geq 4$. Then
    \[
    \treeUnw{n} = \{ (p,r)\in \mathbb{Z}^2\colon 1\leq r <\frac{n}{2} , \lceil \frac{n}{2} \rceil \leq p \leq n-r \} 
    \]
    and
    \[
    \bptUnw{n} = \{ (p,r)\in \mathbb{Z}^2\colon 1\leq r <\lfloor \frac{n}{2} \rfloor, \lceil \frac{n}{2} \rceil \leq p \leq n-2 \} \cup \{(n-1,1)\} \cup A_n
    \]
    where $A_n=\emptyset$ if $n$ is even, and $A_n=\{ (\lceil \frac{n}{2} \rceil, \lfloor \frac{n}{2} \rfloor) \}$ if $n$ is odd.
\end{theorem}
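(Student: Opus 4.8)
The plan is to convert both equalities into statements about combinatorial invariants of $G$ and then prove each as a two-way containment: first that every realizable pair $(p,r)$ lies in the asserted region, and second that every pair in the region is realized by an explicit graph. For trees this reduction is clean. A tree is chordal, so $\reg S/I(G) = \nu(G)$, the induced matching number of $G$; and a tree (indeed any forest) is sequentially Cohen--Macaulay, so $\pd S/I(G)$ equals the big height of $I(G)$, i.e.\ the maximum cardinality of a minimal vertex cover, which is $n - i(G)$, where $i(G)$ denotes the minimum cardinality of a maximal independent set of $G$. Thus the tree statement becomes the purely graph-theoretic assertion that
\[
\treeUnw{n} = \bigl\{ (n - i(G),\, \nu(G)) \colon G \text{ a tree on } n \text{ vertices} \bigr\},
\]
and I would aim to match this set against the claimed region.

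For the forward containment I would prove three inequalities. (i) $\nu(G) \ge 1$ because a tree on $n \ge 4$ vertices has an edge, and $\nu(G) < n/2$ because an induced matching of size $\nu$ spans $2\nu$ vertices and a connected graph on $n \ge 4$ vertices cannot be a disjoint union of edges; this gives $1 \le r < n/2$. (ii) $i(G) \le \lfloor n/2 \rfloor$, which I would prove by induction via leaf deletion; this gives $p = n - i(G) \ge \lceil n/2 \rceil$. (iii) $i(G) \ge \nu(G)$, equivalently $\depth S/I(G) \ge \reg S/I(G)$, again by induction on the tree, tracking the effect of deleting a leaf together with its support vertex (resp.\ the closed neighborhood of the support vertex) on both $i$ and $\nu$; this yields $p \le n - r$. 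Together these three place every realizable pair inside the claimed region.

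For the reverse containment I would construct, for each target $(p,r)$ in the region, a tree on $n$ vertices with $\nu(G) = r$ and $i(G) = n - p$. A natural family takes $r$ pairwise ``far apart'' edges, so that the induced matching number is forced to be exactly $r$, joins them into a single tree by a sparse spine, and then attaches pendant leaves to slide $i(G)$---and hence $p$---across its full admissible range $[\lceil n/2 \rceil, n-r]$ without changing $\nu(G)$. The invariants of each member would be verified from the combinatorial formulas above, assisted where convenient by the Betti-splitting machinery of \cref{thm:betti-splittings} (splitting off the generators through a chosen leaf, whose star has a linear resolution) and the leaf-induction bookkeeping from the previous step.

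\textbf{The bipartite case is where the real difficulty lies.} Connected bipartite graphs need not be sequentially Cohen--Macaulay, so neither the identity $\pd S/I(G) = n - i(G)$ nor $\reg S/I(G) = \nu(G)$ holds in general; controlling both invariants simultaneously therefore requires finer tools---Betti splittings along a carefully chosen vertex, the Restriction Lemma of \cite{HHZ04} to bound regularity from below by that of an induced subgraph, and direct resolution estimates. The star $K_{1,n-1}$ (a bipartite graph) accounts for the isolated point $(n-1,1)$ lying above the generic ceiling $p \le n-2$, while the parity-dependent set $A_n$ reflects a ``balanced'' configuration---a near-perfect induced matching realizable only when $n$ is odd---that attains $(\lceil n/2 \rceil, \lfloor n/2 \rfloor)$. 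Establishing that these exceptional boundary values are attained, and that nothing outside the stated region occurs, is the most delicate part of the argument, and is where I would expect to spend the bulk of the effort; cf.\ \cite{EH21}.
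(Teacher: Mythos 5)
First, a structural point: the paper does not prove this statement at all --- it is imported verbatim from Erey and Hibi \cite[Theorems 3.13 and 3.14]{EH21} and used as a black box in the proofs of \cref{thm:bipartite-wo} and \cref{thm:tree-wo}. Your proposal is therefore an attempt to reprove an external theorem, and it has to stand on its own; in particular it cannot lean on \cite{EH21}.

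Judged that way, there is a genuine gap, and you have flagged it yourself: the bipartite half is never proved. Observing that connected bipartite graphs need not be sequentially Cohen--Macaulay, listing the tools one might use, and saying this ``is where I would expect to spend the bulk of the effort; cf.\ \cite{EH21}'' is a statement of difficulty, not an argument --- and citing \cite{EH21} at that point is circular, since that is precisely the theorem being established. Neither containment is addressed for bipartite graphs: there is no argument that a realizable pair must satisfy $p \leq n-2$ except for the exceptional points $(n-1,1)$ and $A_n$, and no constructions realizing the interior pairs or those exceptional points. The tree half is in better shape: the reduction $\reg S/I(G) = \nu(G)$ (trees are chordal) and $\pd S/I(G) = n - i(G)$ via sequential Cohen--Macaulayness and big height is correct, and it is essentially the combinatorial platform on which the cited proof also rests. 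But even there the load-bearing steps are left as intentions: the inequalities $i(G) \leq \lfloor n/2 \rfloor$ and $i(G) \geq \nu(G)$ for trees are nontrivial and are only promised ``by induction via leaf deletion'' (note that the naive induction breaks, since deleting a closed neighborhood can isolate vertices, and isolated vertices inflate every maximal independent set), and the realizability construction --- far-apart edges joined by a sparse spine with pendant leaves --- is described qualitatively, with no verification that $\nu$ and $i$ sweep out the full claimed range $\lceil n/2 \rceil \leq p \leq n-r$ for every admissible $r$. As it stands, the proposal is a plausible outline for trees and a placeholder for bipartite graphs, not a proof of either equality.
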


Building up on this result, we will compute an analog for weighted oriented trees and biparite graphs.

\begin{theorem}\label{thm:bipartite-wo}
    For all $n \geq 4$, 
    \[\bpt{n} = \{(p,r) \in \NN\colon \lceil \frac{n}{2} \rceil \leq p \leq n-1\} \cup \{(n, r) \colon r \geq 4\}.\]
\end{theorem}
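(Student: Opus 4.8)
The plan is to prove the two inclusions that constitute set equality, using the known unweighted result (\cref{thm:bpt-and-tree}) as a scaffold and \cref{lem:construct-new-graph} to inflate regularity. First I would handle the containment
\[
\{(p,r) \in \NN \colon \lceil \tfrac{n}{2} \rceil \leq p \leq n-1\} \cup \{(n,r) \colon r \geq 4\} \subseteq \bpt{n}.
\]
For the block with $\lceil n/2 \rceil \le p \le n-1$ and arbitrary $r \ge 1$, I would start from an unweighted bipartite graph $G$ realizing the pair $(p,1)$ (such a $G$ exists by \cref{thm:bpt-and-tree}, e.g.\ via the $(n-1,1)$ point or the low-regularity stratum), then apply \cref{lem:construct-new-graph} with the appropriate shift to produce a weight function and orientation giving $\pdim(G,\w) = p$ and $\reg(G,\w) = 1 + (r-1) = r$, while keeping $G$ bipartite since the underlying graph is unchanged. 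This realizes every $(p,r)$ with $p \le n-1$ and $r \ge 1$.

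The genuinely new phenomenon is the top row $p = n$, i.e.\ projective dimension $n$, which is exactly depth $0$ and is unattainable for unweighted bipartite graphs. Here I would invoke the depth-zero characterization (\cref{thm:depth-zero-wo-graphs}): a connected bipartite weighted oriented graph on $n$ vertices has $\pdim = n$ iff it contains a spanning naturally oriented maximal pseudo-forest with the stated weight conditions. Since the graph is bipartite, its unique induced cycle (if present) must be even, so the relevant naturally oriented unicyclic spanning subgraph is built on an \emph{even} cycle $C_{2k}$ with $2k \ge 4$. By the Restriction Lemma together with \cref{lem:reg-cycles}, any depth-zero bipartite $(G,\w)$ satisfies $\reg(G,\w) \ge \reg(C_{2k}, \w|_{C_{2k}}) \ge 2k \ge 4$; this forces the $r \ge 4$ constraint and simultaneously proves the $\supseteq$ direction's lower bound. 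To realize each $(n, r)$ with $r \ge 4$, I would take a naturally oriented spanning even-cycle pseudo-forest on $n$ vertices (a cycle $C_4$ with $n-4$ leaves or weighted vertices attached, all internal weights $\ge 2$) and tune the weights via \cref{cor:regularity-pseudo-forest} so that $\sum \w(v) - |E| = r$; this is a linear Diophantine adjustment that is freely solvable for any $r \ge 4$.

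For the reverse containment $\bpt{n} \subseteq \{\dots\}$, I would argue that any connected bipartite $(G,\w)$ has $\pdim$ between $\lceil n/2 \rceil$ and $n$: the lower bound $\pdim \ge \lceil n/2 \rceil$ follows because $\pdim(G,\w) \ge \pdim G \ge \lceil n/2\rceil$ by \cref{prop:depth(G w)-leq-depth(G)} (rewritten as $\pdim(G,\w) = n - \depth(G,\w) \ge n - \depth G = \pdim G$) combined with the unweighted bound from \cref{thm:bpt-and-tree}, and the upper bound $\pdim \le n$ is automatic. When $\pdim \le n-1$ (depth $\ge 1$) there is no constraint on $r$ beyond $r \in \NN$, and when $\pdim = n$ the depth-zero/even-cycle argument above forces $r \ge 4$. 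The main obstacle I anticipate is the $p = n-1$ boundary case: showing that projective dimension exactly $n-1$ is simultaneously achievable for \emph{every} $r \ge 1$ in the bipartite setting, and checking that the depth-$0$ forcing argument genuinely yields the sharp threshold $r \ge 4$ rather than something weaker — this requires care that the minimal even cycle obtainable in a bipartite spanning pseudo-forest is a $C_4$ and that $C_4$ with weights $\ge 2$ already achieves $\reg = 4$, so the threshold is attained and not merely bounded.
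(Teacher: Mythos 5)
Your proposal is correct and takes essentially the same approach as the paper's proof: pairs with $\lceil \frac{n}{2} \rceil \leq p \leq n-1$ are realized from unweighted bipartite graphs with regularity $1$ via \cref{thm:bpt-and-tree} and \cref{lem:construct-new-graph}, the pairs $(n,r)$ with $r \geq 4$ via a naturally oriented $C_4$ with $n-4$ leaves attached and \cref{cor:regularity-pseudo-forest}, and the reverse inclusion via \cref{prop:depth(G w)-leq-depth(G)}, \cref{thm:depth-zero-wo-graphs}, and the observation that bipartiteness forces the (minimal, hence induced) naturally oriented cycle to be even of length at least $4$, whence the Restriction Lemma and \cref{lem:reg-cycles} give $r \geq 4$. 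The paper's proof is exactly this argument, so there is nothing to change beyond tightening the phrasing about the induced cycle.
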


\begin{proof}
Suppose that $(G, \w)$ is a weighted oriented bipartite graph, and set \begin{itemize}
    \item $p \coloneqq \pdim (G, \w)$, $\ p' \coloneqq \pdim (G)$,
    \item $r \coloneqq \reg (G, \w)$, $\ r' \coloneqq \reg (G)$.
\end{itemize}
By \cite[Observation 37]{casiday2021betti}, we have $p' \leq p$ and $r' \leq r$. By \cref{thm:bpt-and-tree}, it follows that $p \geq \lceil \frac{n}{2} \rceil$ and $r \geq 1$. If $p = n$, then $\depth (G, \w) = 0$, so by  \cref{thm:depth-zero-wo-graphs}, $G$ must contain a naturally oriented cycle $C_k$ as a subgraph, where all vertices of $C_k$ have weight at least $2$. By taking $k$ to be minimal, we can assume that $C_k$ is induced. Now, no bipartite graph can contain a subgraph of the form $C_l$, where $l$ is odd, so in particular, $G$ cannot contain $C_3$. Thus, $k \geq 4$. By \cref{lem:reg-cycles}, we then have $\reg (C_k, w|_{C_k}) \geq 4$, and so $\reg (G, \w) \geq 4$ by the Restriction Lemma (\cite[Lemma~4.4]{HHZ04}). This establishes the inclusion 
\[\bpt{n} \subseteq \{(p,r) \in \NN\colon \lceil \frac{n}{2} \rceil \leq p \leq n-1\} \cup \{(n, r) \colon r \geq 4\}.\]
For the inclusion 
\[\{(p,r) \in \NN\colon \lceil \frac{n}{2} \rceil \leq p \leq n-1\} \subseteq \bpt{n}\]
suppose that $\lceil \frac{n}{2} \rceil \leq p \leq n-1$ and $r \in \NN$. By \cref{thm:bpt-and-tree}, there exists a bipartite graph $G$ with $\pdim G = p$ and $\reg G = 1$. Then by \cref{lem:construct-new-graph}, there exists a weight function $\w$ on vertices of $G$ and an orientation on $G$ such that $\pdim (G, \w) = p$ and $\reg (G, \w) = r$. Thus, $(p, r) \in \bpt{n}$. 

For the inclusion 
\[
\{(n, r) \colon r \geq 4\} \subseteq \bpt{n},
\] 
let $r \geq 4$ and consider the weighted oriented graph in \cref{fig:thm-5.2}.  

\begin{figure}[ht!]
    \centering
    \includegraphics[width=0.5\linewidth]{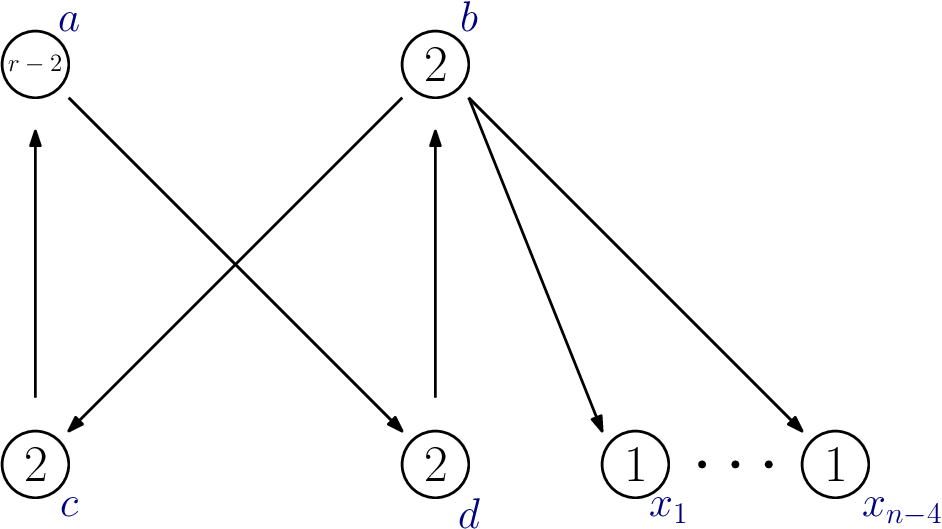}
    \caption{}
    \label{fig:thm-5.2}
\end{figure}

By \cref{cor:regularity-pseudo-forest}, this graph has projective dimension $n$, and regularity $r$, as~desired. 
\end{proof}

\begin{theorem}\label{thm:tree-wo}
    For all $n \geq 4$, 
    \[\tree{n} = \{\lceil \frac{n}{2} \rceil, \lceil \frac{n}{2} \rceil + 1, \dots, n-1\} \times \NN.\]
\end{theorem}

\begin{proof}
Suppose that $(G, \w)$ is a weighted oriented tree. Setting $p = \pdim (G, \w)$, $r = \reg (G, \w)$, $p' = \pdim (G)$, and $r' = \reg (G)$, we have that $p' \leq p$ and $r' \leq r$ by \cite[Observation 37]{casiday2021betti}. So by  \cref{thm:bpt-and-tree}, $p \geq p' \geq \lceil \frac{n}{2} \rceil$ and $r \geq r' \geq 1$.  Since a subgraph of $G$ cannot be a maximal pseudo-forest, by \cref{thm:depth-zero-wo-graphs}, $(G, \w)$ cannot have depth $0$,. Thus, $\lceil \frac{n}{2} \rceil \leq p \leq n-1$, so we see that
 \[\tree{n} \subseteq \{\lceil \frac{n}{2} \rceil, \lceil \frac{n}{2} \rceil + 1, \dots, n-1\} \times \NN.\]
Conversely, suppose that $\lceil \frac{n}{2} \rceil \leq p \leq n-1$ and $r \in \NN$. By \cref{thm:bpt-and-tree}, there is a tree $G$ with $\pdim G = p$ and $\reg G = 1$. Applying \cref{lem:construct-new-graph}, there exists a weight function $\w$ on vertices of $G$ and an orientation on $G$ such that $\pdim (G,\w)=p$ and $\reg (G,\w)= r$, as desired.
\end{proof}

\subsection*{Data availability statement} Data sharing does not apply to this article as no new data were created or
analyzed in this study.

\bibliographystyle{amsalpha}
\bibliography{references}

\end{document}